\newtheorem{theorem}{Theorem}[section]
\newtheorem*{thmmatrices}{Theorem \ref{thm:matrices}}
\newtheorem*{thmsmalldiam}{Theorem \ref{thm:smalldiam}}
\newtheorem*{thmdir}{Theorem \ref{thm:dir}}
\newtheorem*{thmgeneral}{Theorem \ref{thm:general}}
\newtheorem{lemma}[theorem]{Lemma}
\newtheorem{proposition}[theorem]{Proposition}
\newenvironment{bullets} {\vspace{-9pt}\begin{itemize}\itemsep0pt} {\end{itemize}\vspace{-9pt}}
\newcommand{\limsupinfty}[1][n]{\limsup\limits_{#1\rightarrow\infty}}
\newcommand{\liminfinfty}[1][n]{\liminf\limits_{#1\rightarrow\infty}}
\newcommand{\Z}{\mathbb{Z}}
\DeclareMathOperator{\Cay}{Cay}
\DeclareMathOperator{\Aut}{Aut}
\title{\textbf{Large Cayley graphs of small diameter}}
\author{
Grahame Erskine\thanks{Open University, Milton Keynes, UK}\footnotemark[1]\\ \texttt{\small grahame.erskine@open.ac.uk}
\and James Tuite\footnotemark[1]\\ \texttt{\small james.tuite@open.ac.uk}
}
\date{}
\begin{document}
\maketitle
\let\thefootnote\relax\footnote{Mathematics subject classification: 05C25,05C35}
\let\thefootnote\relax\footnote{Keywords: degree-diameter problem, Cayley graphs}
\begin{abstract}\noindent
The degree-diameter problem seeks to find the largest possible number of vertices in a graph having given diameter and given maximum degree.
Very often the problem is studied for restricted families of graph such as vertex-transitive or Cayley graphs,
with the goal being to find a family of graphs with good asymptotic properties.
In this paper we restrict attention to Cayley graphs, and study the asymptotics by fixing a small diameter and constructing families of graphs
of large order for all values of the maximum degree. 
Much of the literature in this direction is focused on the diameter two case.
In this paper we consider larger diameters, and use a variety of techniques to derive new best asymptotic constructions for
diameters 3, 4 and 5 as well as an improvement to the general bound for all odd diameters.
Our diameter 3 construction is, as far as we know, the first to employ matrix groups over finite fields in the degree-diameter problem.
\end{abstract}

\section{Introduction}\label{sec:intro}
\subsection{Background and notation}
The goal of the \emph{degree-diameter problem} is to identify the largest possible number $n(d,k)$ of vertices 
in a graph having diameter $k$ and maximum degree $d$.
This is a very general question, and a typical approach is to study restricted versions of the problem, where we consider only graphs of a certain type
such as vertex-transitive or Cayley graphs.
In this paper we consider Cayley graphs, and study both undirected and directed versions of the problem.
For a history and more complete summary of the degree-diameter problem, see the survey paper by Miller and \v{S}ir\'a\v{n}~\cite{miller2005moore}.

Our aim is to study an asymptotic version of the degree-diameter problem. 
We fix a (small) diameter $k$, and ask how large a graph of maximum degree $d$ we can create, then let $d$ go to $\infty$.
To avoid trivial cases, we insist $d\geq 3$ in the case of undirected graphs, and $d\geq 2$ in the directed case.
If our largest graph has order $n(d,k)$ then this value is limited by the well-known Moore bound (see for example~\cite{miller2005moore}) as follows.

\[
n(d,k)\leq M_{d,k}=
\begin{cases}
\displaystyle 1+d\frac{(d-1)^k-1}{d-2} & \qquad\text{(undirected case)}\\[1em]
\displaystyle \frac{d^{k+1}-1}{d-1} & \qquad\text{(directed case)}
\end{cases}
\]

When we restrict attention to a particular family of graphs $\mathcal{G}$, we will denote our limit by $n(d,k;\mathcal{G})$.
Where the context is unambiguous, we will omit the family $\mathcal{G}$ from the notation.

For a given family $\mathcal{G}$ and diameter $k$, we are interested in determining the asymptotics of $n(d,k;\mathcal{G})$ as the degree $d$ tends to infinity.
We note that in both undirected and directed cases, the Moore bound $M_{d,k}$ can be expressed in the form $M_{d,k}=d^k+o(d^k)$ as $d\to\infty$.
To measure the progress of our constructions we define the following quantities.
\begin{align}
  L^-(k;\mathcal{G}) &= \liminfinfty[d] \frac{n(d,k;\mathcal{G})}{d^k} \\
  L^+(k;\mathcal{G}) &= \limsupinfty[d] \frac{n(d,k;\mathcal{G})}{d^k}
\end{align}

As before, we omit the family $\mathcal{G}$ where the context is clear.

We focus on the restricted case of Cayley graphs, so that unless otherwise stated we may take $\mathcal{G}$ to be the family of all such graphs.
We define a \emph{Cayley graph} as follows.

Let $G$ be a finite group with identity element 1 and let $S\subseteq G$ a subset such that $1\notin S$.
Then the \emph{Cayley graph} $\Cay(G,S)$ has the elements of $G$ as its vertex set and each vertex $g$ has an arc to $gs$ for each $s\in S$.
The following properties of $\Cay(G,S)$ are immediate from the definition.
\begin{bullets}
 \item $\Cay(G,S)$ has order $|G|$ and is a regular graph of degree $|S|$.
 \item $\Cay(G,S)$ has diameter at most $k$ if and only if every element of $G$ can be expressed as a product of no more than $k$ elements of $S$.
 \item $\Cay(G,S)$ is an undirected graph if $S=S^{-1}$; otherwise it is a directed graph.
\end{bullets}

\subsection{Existing results}
Much of the existing research tends to focus on the diameter 2 case. For Cayley graphs, the best general result at diameter 2 is due to Abas~\cite{Abas2015a}
yielding $L^-(2)\geq 0.684$. For diameters 3, 4 and 5 the best current results are due to Vetr\'ik~\cite{vetrik345} giving
$L^-(3)\geq 0.187$, $L^-(4)\geq 0.051$ and $L^-(5)\geq 0.024$. 

For larger diameters, no better result is known than that of Macbeth, \v{S}iagiov\'a, \v{S}ir\'a\v{n} and Vetr\'ik~\cite{macbeth2010large}.
This yields $L^-(k)\geq\frac{k}{3^k}$ for any diameter $k$.

\subsection{Outline of new results}
We use two distinct techniques to improve these bounds. In Section~\ref{sec:matgroups} we find an explicit construction of Cayley graphs of 
certain groups of $3\times 3$ matrices over a finite field to give our first result.
\begin{thmmatrices}
In the class of undirected Cayley graphs,
\begin{align*}
L^-(3)\geq \frac{1}{4}&=0.25000
\end{align*}
\end{thmmatrices}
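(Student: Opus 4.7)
The plan is to exhibit, for each sufficiently large prime power $q$, an explicit group $G_q$ consisting of $3\times 3$ matrices over $\mathbb{F}_q$ together with an inverse-closed subset $S_q \subseteq G_q \setminus \{1\}$, such that $\Cay(G_q,S_q)$ has diameter at most $3$ and the ratio $|G_q|/|S_q|^3$ tends to $1/4$ as $q\to\infty$. Because the prime powers are dense enough in the positive integers that consecutive values satisfy $d_i/d_{i+1}\to 1$, and because $n(d,k)$ is non-decreasing in $d$, this yields $L^-(3)\geq 1/4$ along the full sequence of degrees.

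I would carry this out in three main steps. First, fix the group. The target ratio $1/4$ is naturally realised by a group of order about $2q^3$ generated by about $2q$ elements (or, equivalently, a group of order $q^3$ generated by about $\sqrt[3]{4}\,q$ elements), so the natural first candidates are a Heisenberg-type group of upper unitriangular $3\times 3$ matrices over $\mathbb{F}_q$, possibly extended by a small cyclic automorphism, or an analogous Borel-type subgroup of $GL_3(\mathbb{F}_q)$. I would parametrize its elements in coordinates, record the multiplication rule, and compute $|G_q|$. Second, define $S_q$ as a union of a small number of one-parameter families of matrices (with the free entry ranging over $\mathbb{F}_q$ or $\mathbb{F}_q^*$), chosen so that $S_q^{-1}=S_q$ and $|S_q|\sim 2q$. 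Third, verify that $\Cay(G_q,S_q)$ has diameter at most $3$ by stratifying $G_q$ according to the zero/non-zero pattern of the coordinates of each element, and for each stratum writing down an explicit product of at most three elements of $S_q\cup\{1\}$ equal to a generic element of that stratum. With the group and generators in hand, the limit $|G_q|/|S_q|^3\to 1/4$ is then immediate.

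The main obstacle is the diameter verification. Since $1/4$ is only a factor of $4$ below the Moore-bound coefficient $1$ for undirected diameter-$3$ graphs, the number of length-$\leq 3$ words over $S_q$ exceeds $|G_q|$ by only a narrow margin, leaving very little slack for overlap. This forces a careful choice of $S_q$ so that words of different \emph{types} (i.e.\ sequences specifying which one-parameter family each factor is drawn from) cover essentially disjoint regions of $G_q$. A group automorphism of $G_q$ preserving $S_q$ and permuting the parameter families should cut the case analysis roughly in half; the remaining strata would be handled by solving one- or two-variable equations over $\mathbb{F}_q$ to exhibit the factors explicitly, with care taken in the cases where a parameter is forced to avoid $0$ (or a handful of other excluded values), which is possible provided $q$ is large enough.
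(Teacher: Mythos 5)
Your overall strategy is the same as the paper's: a unitriangular matrix group over a finite field of order about $2q^3$ (the paper takes $G=H\times\Z_2$ with $H$ the upper unitriangular subgroup of $SL(3,p)$), an inverse-closed generating set built from one-parameter families of size about $2q$, a stratified case analysis to verify diameter $3$, and a density-of-primes argument to pass from prime degrees to all degrees. However, as written the proposal is a plan rather than a proof: no group is fixed, no generating set is exhibited, and the diameter verification --- which you correctly identify as the main obstacle --- is only described, not carried out. The concrete gap is in your final step. You anticipate that the only difficulties will be ``a handful of excluded values'' of the parameters, fixable for $q$ large. In the paper's construction (and plausibly in any construction of this shape) the situation is worse: the $\sim 2q$ main generators provably fail to reach two entire subgroups of order $\Theta(q^2)$ --- the elements $\left(\begin{smallmatrix}1&0&b\\0&1&c\\0&0&1\end{smallmatrix}\right)$ in the even part and $\left(\begin{smallmatrix}1&a&b\\0&1&0\\0&0&1\end{smallmatrix}\right)$ in the odd part, i.e.\ copies of $\Z_p\times\Z_p$ and $\Z_p\times\Z_p\times\Z_2$. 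The missing idea is that these leftover subgroups can be covered by adjoining auxiliary inverse-closed sets of size only $O(q^{2/3})$, obtained from separate diameter-$3$ generating sets for $\Z_n$ and $\Z_n\times\Z_n$ (the paper's Lemmas~\ref{lem:zn} and~\ref{lem:znzn}); since $2q+O(q^{2/3})=2q+o(q)$, the asymptotic ratio $1/4$ survives. Without this patch, your ``narrow margin'' concern is fatal rather than merely delicate. A further minor point: your appeal to monotonicity of $n(d,3)$ in $d$ needs care inside the class of undirected Cayley graphs, since raising the degree means enlarging the generating set while keeping it inverse-closed; adding an odd number of elements requires an available involution outside the current set (this is exactly the hypothesis the paper builds into Lemma~\ref{lem:prime}).
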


This is as far as we know the first published construction to use matrix groups in this way.

In Section~\ref{sec:semidirect} we generalise the techniques of Bevan, Macbeth, \v{S}iagiov\'a, \v{S}ir\'a\v{n}, Vetr\'ik and others 
by using a construction based on semidirect products. In this way we prove the following.
\begin{thmsmalldiam}
In the class of undirected Cayley graphs,
\begin{align*}
L^-(4)&\geq \frac{60}{5^4} \approx 0.09600 \\
L^-(5)&\geq \frac{60}{4^5} \approx 0.05859 \\
L^-(6)&\geq \frac{78}{4^6} \approx 0.01904 \\
L^-(7)&\geq \frac{168}{4^7} \approx 0.01025 \\
\end{align*}
\end{thmsmalldiam}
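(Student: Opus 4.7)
For each diameter $k \in \{4,5,6,7\}$ the plan is to build a family of undirected Cayley graphs of semidirect products $G_p = H_p \rtimes K$, where $H_p = \mathbb{F}_p^{k}$ is regarded as a $K$-module under a fixed faithful linear representation, and $K$ is a small group of order equal to the numerator in the target bound ($60$ for $k \in \{4,5\}$, $78$ for $k=6$, $168$ for $k=7$). As the prime $p \to \infty$ we have $|G_p| = |K|\,p^{k}$, and the aim is to exhibit a symmetric connection set $S_p$ of size $d_k p + O(1)$ (with $d_k = 5$ for $k=4$ and $d_k = 4$ otherwise) for which $\Cay(G_p, S_p)$ has diameter at most $k$. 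The ratio $|G_p|/|S_p|^k$ then tends to $|K|/d_k^{k}$, matching the four stated values of $L^-(k)$.

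The connection set has the form $S_p = (T_p \cup T_p^{-1}) \cup U$, where $U = U^{-1}$ is a fixed generating set for $K$ whose associated Cayley graph $\Cay(K,U)$ has small diameter, and $T_p \subset H_p$ is a set of module vectors of cardinality linear in $p$. The workhorse of the argument is the multiplication rule in a semidirect product,
\begin{equation*}
(h_1, k_1)(h_2, k_2)\cdots (h_\ell, k_\ell) \;=\; \Bigl(\textstyle\sum_{i=1}^{\ell} (k_1 k_2 \cdots k_{i-1})\!\cdot\! h_i,\ k_1 k_2 \cdots k_\ell\Bigr),
\end{equation*}
which shows that the $H$-part of any length-$\ell$ word in $S_p$ is a signed sum of $K$-translates of elements of $T_p$. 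To bound the diameter I would argue that every $(h,k_0) \in G_p$ admits a word of length at most $k$ in $S_p$, by writing $k_0$ as a short word in $U$ and filling the remaining factors with elements of $T_p \cup T_p^{-1}$ whose $K$-shifted sum is exactly $h$. Each of the four diameters is then handled by its own choice of $K$, $K$-module structure, and seed set $T_p$, tailored to meet the advertised constants.

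The main obstacle is the coverage step: one must engineer the faithful $K$-action on $H_p$ together with the seed set $T_p$ so that signed sums of at most $k$ (minus the bounded number of factors reserved for $U$) $K$-translates of $T_p$-elements really do exhaust all of $H_p = \mathbb{F}_p^{k}$. As $k$ grows, the combinatorics of the partial products $k_1 \cdots k_{i-1}$ that may appear as scalar multipliers becomes intricate, so the $K$-module must be chosen to decompose over $\mathbb{F}_p$ in a way that makes the coverage analysis tractable, for instance by arranging that suitable $K$-orbits in $H_p$ yield an $\mathbb{F}_p$-basis. A case-by-case verification, most likely carried out with explicit matrix representations of the four groups $K$, should then supply the diameter bound in each case and, after a short calculation of $|G_p|/|S_p|^k$, establish the four stated asymptotic lower bounds.
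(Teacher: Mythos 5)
There is a genuine gap here, and it is structural rather than a matter of missing detail. You split the connection set into ``pure module'' elements $T_p\subseteq H_p$ (with trivial $K$-component) and a fixed set $U\subseteq K$ (with trivial $H_p$-component). Now consider a target element $(h,k_0)$ with $k_0\neq 1$. Since the $K$-component of a word is the product of the $K$-components of its factors, any word of length at most $k$ representing $(h,k_0)$ must use at least one factor from $U$, hence at most $k-1$ factors from $T_p\cup T_p^{-1}$. By your own multiplication formula, the $H_p$-part of such a word is a sum of at most $k-1$ terms $g_i\cdot t_i$ with $t_i\in \pm T_p$ and $g_i$ ranging over the finitely many partial products of elements of $K$. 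Since $|T_p|=O(p)$ and the number of admissible patterns (positions of the $U$-factors, coefficients $g_i$, signs) is bounded in terms of $k$ and $|K|$ alone, the set of reachable $H_p$-parts has size $O(p^{k-1})=o(p^{k})$ and cannot exhaust $H_p=\mathbb{F}_p^{k}$. So the coverage step you flag as ``the main obstacle'' is not merely delicate: it is impossible with this shape of connection set, for every nonidentity coset $H_p\times\{k_0\}$, no matter how the faithful representation and the seed set $T_p$ are engineered.

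The paper's construction avoids exactly this by making every generator carry both components at once: the generators have the form $(\mathbf{v}_i(x);s_i)$ for $x\in H=\Z_m$, where $s_i$ ranges over a set $S\subseteq K$ of size $s$, each $\mathbf{v}_i$ is a fixed 0/1 vector of length $k$, and $K$ acts on $H^k$ by permuting coordinates. A word of length exactly $k$ then contributes $k$ free parameters $x_1,\dots,x_k\in H$ to the $H^k$-part while simultaneously tracing out a length-$k$ factorisation of the target element of $K$; coverage reduces to the associated $k\times k$ mapping matrix being invertible over $\Z$, and the groups $K$ of orders $60$, $60$, $78$, $168$ together with the sets $S$ and $V$ realising the stated constants were found by computer search. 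Two further remarks: the paper takes $H=\Z_m$ for every integer $m\geq 2$ (degree $sm$, with one or two extra generators to handle the other residues modulo $s$), so it obtains $L^-$ directly, whereas your restriction to $H_p=\mathbb{F}_p^{k}$ with $p$ prime would additionally require the prime-gap argument of Lemma~\ref{lem:prime} to pass from a bound on $L^+$ to one on $L^-$.
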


In Section~\ref{sec:dir} we extend this method to the case of directed graphs to obtain the following.
\begin{thmdir}
In the class of directed Cayley graphs,
\begin{align*}
L^-(3)&\geq \frac{48}{4^3} = 0.75000 \\
L^-(4)&\geq \frac{36}{3^4} \approx 0.44444 \\
L^-(5)&\geq \frac{120}{3^5} \approx 0.49382 \\
\end{align*}
\end{thmdir}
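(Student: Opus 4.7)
The plan is to extend the semidirect-product framework behind Theorem~\ref{thm:smalldiam} to the directed setting, exploiting the fact that in the directed case we no longer require $S=S^{-1}$, so each generator contributes only one arc rather than an undirected edge. For each $k\in\{3,4,5\}$ I would construct a one-parameter family of Cayley graphs $\Cay(G_m,S_m)$ where $G_m=N_m\rtimes H$, $N_m$ is an abelian group of order $m^k$ (a suitable direct power of $\Z_m$), and $H$ is a fixed finite group acting faithfully on $N_m$ by automorphisms. The generating set splits as $S_m=S_N\cup S_H$, with $S_N$ a set of translations by short vectors in $N_m$ whose $H$-orbits efficiently cover $N_m$, and $S_H$ a bounded set of elements of $H$. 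By arranging $|S_m|=cm+O(1)$ and choosing $H$ of the appropriate order, one gets $|G_m|/|S_m|^k \to |H|/c^k$, which in the three cases is to equal $\tfrac{48}{4^3}$, $\tfrac{36}{3^4}$ and $\tfrac{120}{3^5}$ respectively; the factor $120=5!$ for $k=5$ strongly suggests taking $H$ to be (or to contain) the symmetric group $S_5$.

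The proof then proceeds in three stages. First, for each $k$ I would specify the group $H$, its action on $N_m=\Z_m^k$, and explicit sets $S_N$ and $S_H$, and verify that $|S_m|=cm+O(1)$. Second, I would prove a covering lemma: every element of $G_m$ is a product of at most $k$ generators from $S_m$. The central identity is that a length-$k$ word $(a_1,h_1)(a_2,h_2)\cdots(a_k,h_k)$ in $S_m$ evaluates in the semidirect product to
\[
\bigl(a_1+h_1 a_2+h_1 h_2 a_3+\cdots+h_1\cdots h_{k-1}a_k,\; h_1 h_2\cdots h_k\bigr).
\]
Given a target $(n,h)\in G_m$, one first chooses a factorisation $h=h_1 h_2\cdots h_k$ using letters drawn from $S_H\cup\{1\}$, and then shows that as the $a_i$ range over $S_N\cup\{0\}$, the resulting $H$-twisted sums exhaust $N_m$. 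This step reduces to a finite combinatorial claim about how $H$-translates of a small family of fixed base vectors tile $N_m$, which can be checked uniformly in $m$. Third, computing $|G_m|/|S_m|^k$ and letting $m\to\infty$ gives the stated lower bounds on $L^-(k)$.

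The principal difficulty is the covering lemma in the directed case. Because we cannot invert letters, the coefficient of $a_i$ in the twisted sum is the ordered product $h_1\cdots h_{i-1}$, so the subgroup of $\Aut(N_m)$ spanned by these coefficients depends delicately on the chosen factorisation of $h$. Both the action of $H$ on $N_m$ and the choice of short vectors in $S_N$ must be tuned so that a single well-chosen factorisation for each $h\in H$ carries along enough freedom in $N_m$ to hit every possible $n$. The gap between $\tfrac{120}{3^5}$ and the naive guess $\tfrac{3\cdot 36}{3^5}=\tfrac{108}{3^5}$ obtained by bootstrapping the $k=4$ construction through an extra generator indicates that the $k=5$ case genuinely requires a richer (order-$120$) choice of $H$, and verifying the covering property for this larger group is where I expect the bulk of the combinatorial work to sit.
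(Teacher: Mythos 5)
Your high-level plan---a semidirect product $\Z_m^k\rtimes K$ with a fixed finite acting group, dropping inverse-closure, and reading off $L^-(k)\geq |K|/c^k$---is indeed the paper's framework, and you even guess the correct group $S_5$ for $k=5$ (the computer search behind the theorem finds $K=\Z_2\times S_4$ with $c=4$ for $k=3$, $K=\Z_3\times A_4$ with $c=3$ for $k=4$, and $K=S_5$ with $c=3$ for $k=5$, the action in each case being by coordinate permutations via a homomorphism $K\to S_k$). However, the specific shape you propose for the generating set, $S_m=S_N\cup S_H$ with $S_N$ a set of roughly $cm$ pure translations in $N_m$ and $S_H$ a bounded set of elements of $H$, cannot work, and this is not a detail: it is the crux. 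If every generator is either a pure translation $(a;1)$ or a pure acting element $(0;h)$, then to reach a target $(\mathbf{n};h)$ with $h\neq 1$ at least one of the $k$ letters must come from $S_H$, leaving at most $k-1$ letters that move the $N_m$-coordinate; the number of elements of the coset $N_m\times\{h\}$ reachable by words of length at most $k$ is therefore $O(m^{k-1})$, which for large $m$ falls short of the $m^k$ elements of that coset. So the diameter cannot be $k$ with degree $cm+O(1)$ under your decomposition.

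The paper avoids this by making \emph{every} generator carry a nontrivial component in both factors: the generating set is $\bigcup_{i=1}^{s}\{(\mathbf{v}_i(x);s_i):x\in\Z_m\}$, where $s_1,\dots,s_s$ are fixed elements of $K$, each $\mathbf{v}_i$ is a fixed 0/1-vector of length $k$, and $\mathbf{v}_i(x)$ replaces its 1's by $x$. One then requires (a) that every element of $K$ be expressible as a product of exactly $k$ elements of $\{s_1,\dots,s_s\}$ (with repetition), and (b) that for each element some such ordered factorisation yields a ``mapping matrix''---recording how the $k$ free parameters $x_1,\dots,x_k$ land on the $k$ target coordinates after the accumulated coordinate permutations---that is unimodular over $\Z$, so that every $\mathbf{x}\in\Z_m^k$ is hit. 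This gives degree exactly $sm$ (with $O(1)$ extra generators to cover intermediate degrees) and order $m^k|K|$. Moreover, condition (b) is not established by a uniform combinatorial argument of the kind you sketch in your second stage; it is verified element-by-element of $K$ by an explicit computer search, with the resulting products tabulated in an ancillary file. To repair your proposal you would need to rebuild the covering lemma around mixed generators of this form and then accept that the verification is a finite but large case check rather than a clean tiling statement.
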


In Section~\ref{sec:general} we specialise the method to the case where the acting group in the semidirect product is dihedral.
In this way we obtain a new bound for general (odd) diameters as  follows.
\begin{thmgeneral}
In the class of undirected Cayley graphs, for any odd diameter $k$,
\begin{align*}
L^-(k)&\geq \frac{2k}{3^k}
\end{align*}
\end{thmgeneral}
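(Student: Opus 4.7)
The plan is to specialise the general semidirect product framework developed in Section \ref{sec:semidirect} to the case where the acting group is the dihedral group $D_k = \langle r, s \mid r^k = s^2 = 1,\, srs = r^{-1}\rangle$ of order $2k$. The benchmark construction of Macbeth, \v{S}iagiov\'a, \v{S}ir\'a\v{n} and Vetr\'ik \cite{macbeth2010large} achieves $L^-(k) \geq k/3^k$ by taking a semidirect product $H \rtimes \Z_k$ in which the cyclic group of order $k$ acts on $H$ by coordinate rotation. Replacing $\Z_k$ by $D_k$ doubles the order of the ambient group, so the aim is to carry out this replacement while preserving the diameter $k$ and the asymptotic degree, thereby yielding the promised factor of $2$ improvement.

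Concretely, I would set $G = H \rtimes D_k$ with $H = \Z_m^k$ and $m$ of order $d/3$, letting $r$ act by cyclic shift of the $k$ coordinates and $s$ act by coordinate reversal. The connection set $S$ would closely mirror the one used in the cyclic construction: generators of the form $(v, r^{\pm 1})$ with $v$ ranging over a symmetric subset of $H$ of size about $m$, together with a bounded number of additional generators of the form $(w, s)$ (for instance $w = 0$, which is self-inverse in the undirected graph) that allow one to pass between the cosets $H\langle r\rangle$ and $H\langle r\rangle s$. With degree $d = 3m + O(1)$ and order $|G| = 2k \cdot m^k$, this gives the claimed asymptotic $|G|/d^k \to 2k/3^k$ as $d \to \infty$.

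The verification that the diameter is at most $k$ is the heart of the proof and the main obstacle. For elements $(h, r^j)$ in the rotation coset the argument essentially reduces to the cyclic case, because a word over $S$ using only the rotation-type generators reproduces the Macbeth et al.\ setup. The novel ingredient is to reach every element $(h, r^j s)$ in the reflection coset in at most $k$ steps. The natural approach is to take a length-$k$ word for some carefully chosen rotation element and replace exactly one of its rotation generators by an $s$-type generator, so that the word length does not grow; one then checks that as this single insertion slides through the $k$ positions, the $H$-component of the resulting product sweeps out all of $H$. Here the oddness of $k$ is decisive: the exponents $\sum_{i=1}^k \epsilon_i$ with $\epsilon_i \in \{-1, +1\}$ attain every residue modulo $k$ precisely when $k$ is odd, which is exactly what is required to realise every rotation $r^j$ (and hence, by the toggling argument just described, every reflection $r^j s$) in exactly $k$ steps.

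The delicate point is to choose the translation set appearing in the $(v, r^{\pm 1})$ generators so that, once the sign sequence $(\epsilon_1, \ldots, \epsilon_k)$ and the position of the (possibly present) $s$-generator are fixed, the $H$-component of the product can be made to equal any prescribed element of $\Z_m^k$. This amounts to checking that the relevant partial sums of the form $\sum_i r^{\epsilon_1 + \cdots + \epsilon_{i-1}} v_i$ (modified by the reflection if one is inserted) hit each of the $k$ coordinates of $H$ exactly once as $i$ ranges over $\{1, \ldots, k\}$, which reduces the construction to an explicit combinatorial check almost identical to the cyclic case but requiring a separate verification for each parity of number of reflections used.
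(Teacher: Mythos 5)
There is a genuine gap, and it sits exactly at the point you flag as ``the novel ingredient'': reaching the reflection coset. You propose to use only a \emph{bounded} number of reflection-type generators $(w,s)$ and to cover an element $(h,r^js)$ by taking a length-$k$ rotation word and swapping a single rotation letter for an $s$-letter. This cannot work, by counting. To land in the reflection coset a length-$k$ word must use an odd number of $s$-letters; if those letters are drawn from a set of size $O(1)$ while the rotation letters are drawn from families of total size $O(m)$, then a word with $j\geq 1$ reflection letters carries only $k-j$ free $H$-parameters, so the set of group elements reachable this way has size $O(m^{k-1})$. But the reflection coset has $k\,m^k$ elements, so for large $m$ almost all of it is missed. (The same objection applies to ``$w=0$'': that single generator contributes nothing to the $H$-component.) Your degree count $d=3m+O(1)$ is also inconsistent with the generating set you describe, which has only $2m+O(1)$ elements; and with degree $2m+o(m)$ the claimed ratio would be $2k/2^k$, not $2k/3^k$.

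The paper avoids this by making the reflection generators a full family $b(x)=(\mathbf{v}_s(x);s)$, $x\in H$, of size $m$ --- this is precisely the third family that brings the degree to $3m$ --- where $\mathbf{v}_s$ is a palindromic $0/1$-vector of weight $4$ chosen so that $b(x)^{-1}=b(-x)$, while $\mathbf{v}_r$ and $\mathbf{v}_{r^{-1}}$ are weight-$1$ vectors with $a(x)^{-1}=A(-x)$. Every letter of every length-$k$ word then carries a free parameter in $H$, and the covering argument becomes a delicate bookkeeping exercise (the ``free set'' machinery): the words used are not rotation words with one toggle but strings built from repeated blocks $rsr^{-1}s$ (which contribute a free set of four coordinates per block), padded by runs of $r$, with separate explicit strings for each of $16$ cases depending on the residue of $k$ modulo $4$ and on the parity and range of the exponent $i$ in the target $r^i$ or $r^is$. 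Notably, even the rotation elements $r^i$ are covered by strings containing many $s$'s, so your claim that the rotation coset ``essentially reduces to the cyclic case'' does not reflect how the construction actually works either. Your observation about odd $k$ being needed so that sums of $\pm 1$'s cover all residues modulo $k$ is correct and is indeed where the parity hypothesis enters, but the heart of the proof --- producing, for every element of $D_{2k}$, a length-$k$ string whose mapping onto the $k$ coordinates of $H^k$ is surjective --- is missing from your outline and cannot be supplied along the route you describe.
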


\section{Cayley graphs of matrix groups}\label{sec:matgroups}
Our first construction addresses the diameter 3 case.
Our strategy will be to find a suitable Cayley graph on a group based on a particular subgroup of $SL(3,p)$ for any odd prime $p$.
However, we will be left with some awkward subgroups which our chosen generating set is unable to cover directly.
We therefore begin with two lemmas on diameter 3 Cayley graphs of cyclic and elementary abelian groups.
\begin{lemma}\label{lem:zn}
For any $n\geq 6$ there is a subset $S\subseteq\Z_n$ of cardinality $\displaystyle 6\left\lceil\frac{n^{1/3}}{2}\right\rceil$
such that $\Cay(\Z_n,S)$ has diameter at most 3.
\end{lemma}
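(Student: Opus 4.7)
The plan is to build a generating set from a balanced base-$b$ representation, a standard technique in the degree-diameter problem for cyclic groups. Set $m = \ceil{n^{1/3}/2}$ and $b = 2m+1$, so that $2m \geq n^{1/3}$ and hence $b^{3} \geq (n^{1/3}+1)^{3} > n$. The proposed set is
\[
S \;=\; \{\,\pm i\, b^{j} : 1\leq i\leq m,\ 0\leq j\leq 2\,\} \subseteq \Z_{n},
\]
organised as three ``digit layers'' of $2m$ integer representatives each.

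First I would establish the diameter bound by a pigeonhole/bijection argument. The $b^{3}=(2m+1)^{3}$ triples in $\{-m,\ldots,m\}^{3}$ map via $(a_{0},a_{1},a_{2})\mapsto a_{0}+a_{1}b+a_{2}b^{2}$ into the integer interval $[-(b^{3}-1)/2,(b^{3}-1)/2]$, and reducing modulo $b$ successively shows this map is injective, hence bijective by a cardinality count. Since the interval contains $b^{3}\geq n$ consecutive integers, every residue class of $\Z_{n}$ has a representative of the form $a_{0}+a_{1}b+a_{2}b^{2}$ with each $|a_{i}|\leq m$. Every nonzero summand $a_{i}b^{i}$ lies in $S$, so any vertex of $\Cay(\Z_{n},S)$ is reachable from the identity in at most three steps.

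The hard part will be the cardinality count $|S|=6m$. When $n$ is sufficiently large that all representatives $\pm i\, b^{j}$ lie in $(-n/2, n/2)$, the listed $6m$ values are pairwise distinct modulo $n$ for trivial reasons. For the remaining small values of $n$ (down to the stated threshold $n \geq 6$) reductions of different representatives may collide, and I would deal with these by minor ad hoc adjustments to the construction---for example, replacing any offending generator with some other element of $\Z_{n}$, which cannot increase the diameter. I expect this short case check to be the most fiddly step of an otherwise conceptually clean argument.
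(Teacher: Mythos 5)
Your construction is essentially the paper's own: the authors take $K=\lceil n^{1/3}\rceil$, $M=\lfloor K/2\rfloor$ and $S=\{\pm iK^j: 1\leq i\leq M,\ 0\leq j\leq 2\}$, asserting the three-step representability as "easy to see," which is exactly the balanced base-$b$ digit argument you spell out (your choice $b=2m+1$ is a cosmetic variant making the representation cleanly bijective). Your approach is correct and in fact more careful than the paper's one-line proof, both in justifying the digit expansion and in flagging the possible collisions of generators modulo $n$ for small $n$, which the paper does not address.
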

\begin{proof}
Let $n\geq 6$ and let $K=\lceil n^{1/3}\rceil$ and $M=\lfloor\frac{K}{2}\rfloor$.
Let $S\subseteq \Z_n$ be the set $\{\pm 1,\pm 2,\ldots,\pm M,$ $\pm K,\pm 2K,\ldots,\pm MK,\pm K^2,\pm 2K^2,\ldots,\pm MK^2\}$.
Then it is easy to see that we can express any element of $\Z_n$ as a sum of at most 3 elements of $S$.
\end{proof}

\begin{lemma}\label{lem:znzn}
For all large $n$, there is a subset $T\subseteq\Z_n\times\Z_n$ of cardinality $9n^{2/3}+o(n^{2/3})$
such that $\Cay(\Z_n\times\Z_n,T)$ has diameter at most 3.
\end{lemma}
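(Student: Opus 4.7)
The plan is to bootstrap from Lemma~\ref{lem:zn}: take the set $S \subseteq \Z_n$ provided there, augment it with the identity $0$ so that sums of at most $3$ elements of $S$ can always be padded out to sums of exactly $3$ elements, and then take the Cartesian square in $\Z_n \times \Z_n$. Concretely, set $S' = S \cup \{0\}$, so every $x \in \Z_n$ can be written as $x = s_1 + s_2 + s_3$ with each $s_i \in S'$, and define
\[
T = (S' \times S') \setminus \{(0,0)\}.
\]
Since $S$ is symmetric under negation, so are $S'$ and $T$; since $(0,0) \notin T$, the set $T$ is a valid Cayley connection set. From $|S| = 6\lceil n^{1/3}/2 \rceil = 3n^{1/3} + O(1)$ we obtain $|T| = (|S|+1)^2 - 1 = |S|^2 + 2|S| = 9n^{2/3} + O(n^{1/3})$, which is $9n^{2/3} + o(n^{2/3})$, as required.

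For the diameter bound, fix a nonidentity $(x,y) \in \Z_n \times \Z_n$. Write $x = s_1 + s_2 + s_3$ and $y = t_1 + t_2 + t_3$ with $s_i, t_i \in S'$; this is possible by the defining property of $S'$. Then $(x,y) = \sum_{i=1}^{3} (s_i, t_i)$, each summand lying in $S' \times S'$. Any summand equal to $(0,0)$ is the identity and can simply be discarded, leaving at most three generators from $T$ whose sum is $(x,y)$. Hence $\Cay(\Z_n \times \Z_n, T)$ has diameter at most $3$.

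The one step that actually requires care is the length-padding: a pure Cartesian product $S \times S$ would force the two coordinates of $(x,y)$ to be expressible as sums of the \emph{same} number of elements of $S$, which is not automatic from Lemma~\ref{lem:zn}. Allowing $0$ into the factor sidesteps this at the cost of only $2|S| = O(n^{1/3})$ extra generators, which is safely absorbed into the $o(n^{2/3})$ slack in the statement. Beyond this, the argument is routine.
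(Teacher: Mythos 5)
Your proof is correct and follows essentially the same route as the paper, whose entire argument is the single sentence ``for the set $T$ we may take the Cartesian product of two copies of the set $S$ from Lemma~\ref{lem:zn}.'' In fact you have done slightly better: the length-padding issue you isolate is real (a literal product $S\times S$ forces both coordinates to use the \emph{same} number of summands, which can fail for small $n$ on parity grounds, e.g.\ $n=20$ with $S=\{\pm1,\pm3,\pm9\}$ and the target $(7,14)$), and adjoining $0$ to the factor before taking the product is a clean way to close that gap at negligible cost to the cardinality.
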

\begin{proof}
For the set $T$ we may take the Cartesian product of two copies of the set $S$ from Lemma~\ref{lem:zn}.
\end{proof}

Our diameter 3 construction will be based on finite fields,
which means we can only directly use the construction for degrees which are related to some prime power.
While this construction yields graphs which are valid for an infinite number of degrees and hence can be used to obtain a lower bound on $L^+(k)$,
we would ideally like to extend the validity to all degrees and hence obtain a bound on $L^-(k)$.
Our strategy is to use results from analytic number theory on the distribution of prime numbers to prove that for all sufficiently large degrees $d$,
we can find a prime number such that we can build a graph $\Cay(G,S)$ of degree $d'\leq d$ using our construction.
We then add $d-d'$ generators to our set $S$ yielding a graph of the same order, no larger diameter and degree $d$.
The method hinges on being able to find a prime $p$ such that $d-d'$ is small enough not to affect the asymptotic value of the result.

The method was first used by \v{S}iagiov\'a, \v{S}ir\'a\v{n} and \v{Z}dimalov\'a~\cite{Siran2011large}, and others have since used
similar ideas, for example the first author in~\cite{Erskine2015}.
Because this is such a useful technique we give here a general version of the lemma.

\begin{lemma}\label{lem:prime}
Let $\mathcal{G}$ be a family of groups.
Let $k\geq 2$ and suppose that there exists some $N$ such that for all primes $p\geq N$, 
we can find a group $G(p)\in\mathcal{G}$ and an inverse-closed subset $S(p)\subseteq G(p)$
such that $\Cay(G(p),S(p))$ has diameter $k$. 
Suppose further that there exist positive constants $C,D$ such that as $p\to\infty$, $|G(p)|=Cp^k+o(p^k)$, 
$|S(p)|=Dp+o(p)$ and that for all $p$, $G(p)\setminus S(p)$ contains at least one involution.

Then in the class of Cayley graphs, $\displaystyle L^-(k;\mathcal{G})\geq\frac{C}{D^k}$.
\end{lemma}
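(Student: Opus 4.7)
The plan is to take, for a well-chosen prime $p$, the given Cayley graph $\Cay(G(p),S(p))$ and pad its connection set by $o(d)$ extra elements so that the degree becomes exactly $d$ without increasing the diameter. Given a large target degree $d$, I would let $p$ be the largest prime satisfying $|S(p)|\leq d$. Since $|S(p)|=Dp+o(p)$, this forces $p$ to be asymptotic to $d/D$; to get an additive error of $o(d)$ in this asymptotic, rather than merely a multiplicative constant loss, I would invoke a classical prime-gap bound such as Hoheisel's $p_{n+1}-p_n=O(p_n^{1-c})$, which guarantees $p=(d/D)(1+o(1))$ and consequently $m:=d-|S(p)|=o(d)$.

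Next, I would enlarge $S(p)$ to an inverse-closed set $S'$ of size exactly $d$. Write $m=2\ell$ or $m=2\ell+1$. Choose $\ell$ non-involutions $x_1,\ldots,x_\ell$ in $G(p)\setminus(S(p)\cup\{1\})$ such that the $x_i$ and their inverses are all distinct from one another and from $S(p)$, and adjoin the pairs $\{x_i,x_i^{-1}\}$. In the odd case, additionally adjoin one involution from $G(p)\setminus S(p)$, whose existence is given by hypothesis. Since $|G(p)|\sim Cp^k$ with $k\geq 2$ dwarfs the $O(p)$ elements to be avoided and the $O(m)=o(p)$ elements to be selected, such non-involution pairs always exist in abundance: the non-involutions of any finite group come in inverse pairs outside the identity, so the candidate pool outside $S(p)\cup\{1\}$ and the finitely many involutions still has size $\Theta(p^k)$.

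Because adjoining generators to a Cayley graph cannot increase its diameter, $\Cay(G(p),S')$ then has diameter at most $k$, degree exactly $d$, and order
\[
|G(p)|=Cp^k+o(p^k)=\frac{C}{D^k}d^k+o(d^k).
\]
Dividing by $d^k$ and taking $\liminf$ as $d\to\infty$ delivers $L^-(k;\mathcal{G})\geq C/D^k$, as required.

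The single delicate ingredient is the prime-distribution input: Bertrand's postulate alone would only locate a prime in $[d/(2D),d/D]$, costing a factor of $2^k$ in the final constant, so one genuinely needs an unconditional statement of the form $p_{n+1}/p_n\to 1$ (anything of Hoheisel strength or stronger suffices, and is completely standard). The involution hypothesis plays the more mechanical role of keeping the enlarged connection set inverse-closed when $m$ happens to be odd; without it, the construction would break down for those target degrees $d$ where $|S(p)|$ has the wrong parity, leaving one with a lower bound on $L^+$ rather than on $L^-$.
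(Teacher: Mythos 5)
Your proposal is correct and follows essentially the same route as the paper: pick the largest prime $p$ with $|S(p)|\leq d$, use a prime-gap theorem of strength $p_{n+1}-p_n=o(p_n)$ (the paper cites Baker--Harman--Pintz with exponent $0.525$ where you cite Hoheisel; either suffices) to get $d-|S(p)|=o(d)$, and pad the connection set with an inverse-closed set of the right size, using the guaranteed involution to fix parity. The only cosmetic overreach is your assertion that a finite group has only ``finitely many involutions'' worth discarding --- a group can have $\Theta(|G|)$ involutions --- but since involutions are themselves legitimate padding elements this does not affect the argument.
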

\begin{proof}
It suffices to show that for any sufficiently large degree $d$, we can find a Cayley graph of a group in $\mathcal{G}$
with degree $d$, diameter $k$ and order $\frac{C}{D^k}d^k+o(d^k)$.
Let $d$ be a degree large enough so that there exists a prime $p$ such that we can find a group $G(p)$ and a set $S(p)$ satisfying the conditions.
We choose $p$ to be the largest such prime so that $|S(p)|\leq d$.
We now add any inverse-closed set of size $d-|S(p)|$ chosen from $G(p)\setminus S(p)$ to our generating set to obtain a new generating set $S'(p)$.
Note that we can always do this because if we need to add an odd number of generators, we have an involution in $G(p)\setminus S(p)$.

Let $d'=|S(p)|$. Then $d'=Dp+o(p)$.
Now we use the result of Baker, Harman and Pintz~\cite{baker2001difference}
which states that for sufficiently large $x$, we are guaranteed a prime in the interval $(x,x+x^{\theta}]$ where $\theta=0.525$.
This means that $p=\frac{1}{D}d'+o(d')=\frac{1}{D}d+o(d)$.
Then $\Cay(G(p),S'(p))$ has the required properties.
\end{proof}

For any odd prime $p$, we begin with a group $H$ which is the unique non-abelian group of order $p^3$ with exponent $p$.
This has the form $(\Z_p\times\Z_p)\rtimes\Z_p$.
It is well known that the group $H$ can be viewed as the \emph{upper unitriangular} subgroup of $SL(3,p)$, i.e. the subgroup consisting of matrices of the form
$\begin{pmatrix}1&a&b\\0&1&c\\0&0&1\end{pmatrix}$ where $a,b,c$ are arbitrary elements of $GF(p)$.
The group $G$ for our Cayley graph will be a direct product of this group with $\Z_2$.
\begin{lemma}\label{lem:zpzpzp}
Let $p$ be an odd prime. Let $H$ be the upper unitriangular subgroup of $SL(3,p)$ and let $G=H\times\Z_2$. 
Then there is an inverse-closed subset $S$ of $G$ with cardinality $2p+O(p^{2/3})$ such that the Cayley
graph $\Cay(G,S)$ has diameter 3, and $S$ contains neither the identity nor the unique involution of $G$.
\end{lemma}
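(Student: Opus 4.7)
The plan is to take $S$ as the union of four pieces: two ``axis'' families of size $p-1$ each providing the bulk, and two correcting subsets of size $O(p^{2/3})$ that patch residual obstructions.

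Writing $M(a,b,c)$ for the generic element of $H$, with multiplication rule $M(a,b,c)M(a',b',c') = M(a+a',\,b+b'+ac',\,c+c')$, I set
\[
A = \{(M(a,0,0),0) : a \in \Z_p \setminus \{0\}\}, \qquad B = \{(M(0,0,c),1) : c \in \Z_p \setminus \{0\}\}.
\]
Both are inverse-closed of size $p-1$ and contain neither the identity nor the involution $(I,1)$. A short case analysis on products of at most three elements of $A \cup B$ will show that $A \cup B$ already reaches every element of $G$ in $\leq 3$ steps, except for two residual families. For $(M(a,b,c),0)$ with $a \neq 0$ the factorisation $(M(0,0,c-b/a),1)(M(a,0,0),0)(M(0,0,b/a),1)$ works, with degenerate values of $b/a$ or $c-b/a$ absorbed by shorter words $AA$, $AB$, $BA$, $BB$; for $(M(a,b,c),1)$ with $c \neq 0$, an analogous three-letter word $ABA$, $AAB$ or $BAA$ does the job. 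The targets that escape the analysis are
\[
U_1 = \{(M(0,b,c),0) : b \neq 0\}, \qquad U_2 = \{(M(a,b,0),1) : (a,b) \neq (0,0)\},
\]
lying respectively in the abelian subgroup $\{(M(0,b,c),0)\} \cong \Z_p \times \Z_p$ of $G$ and in the coset $H_2 \times \{1\}$ of the abelian subgroup $H_2 \times \{0\}$, where $H_2 = \{M(a,b,0)\} \cong \Z_p \times \Z_p$. They are unreachable from $A \cup B$ because every 3-letter $AB$-word either has $c$-coordinate a nonzero sum of $B$-contributions (when the $\Z_2$-parity is $1$), or is trapped on the hypersurface where the central coordinate $b$ is a fixed function of $a$ and $c$ (when the parity is $0$ and $a \neq 0$), while $a = 0$ with parity $0$ forces $b = 0$.

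To patch $U_1$, apply Lemma~\ref{lem:znzn} to $\Z_p \times \Z_p$ to obtain a symmetric $T_1' \subseteq \Z_p \times \Z_p$ of size $9p^{2/3} + o(p^{2/3})$ with Cayley diameter $3$, and lift via $(b,c) \mapsto (M(0,b,c),0)$ to a subset $T_1 \subseteq G$. Since $T_1$ sits inside the abelian subgroup containing $U_1$, every element of $U_1$ is reached in at most three $T_1$-steps. For $U_2$ the only subtlety is the $\Z_2$-parity constraint, so I take another symmetric $T_2' \subseteq \Z_p \times \Z_p$ from Lemma~\ref{lem:znzn} and lift it in both parities,
\[
T_2^+ = \{(M(a,b,0),1) : (a,b) \in T_2'\}, \qquad T_2^- = \{(M(a,b,0),0) : (a,b) \in T_2'\}.
\]
Any representation of $(a^*,b^*) \in \Z_p^2$ as a sum of $k \leq 3$ elements of $T_2'$ then lifts to a length-$k$ word in $T_2^+ \cup T_2^-$ whose $\Z_2$-parities sum to $1$, and whose product is therefore $(M(a^*,b^*,0),1)$. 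Setting $S = A \cup B \cup T_1 \cup T_2^+ \cup T_2^-$ gives $|S| = 2(p-1) + O(p^{2/3}) = 2p + O(p^{2/3})$, $S$ is inverse-closed, $\Cay(G,S)$ has diameter at most $3$, and neither the identity nor $(I,1)$ lies in $S$ because each defining lift uses nonzero coordinates.

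The main obstacle is the bookkeeping in the first step: one must enumerate the 3-letter words in $\{A,B\}^{\leq 3}$, compute each product using the Heisenberg multiplication rule, and verify that together they sweep out everything outside $U_1 \cup U_2$, attending to the degenerate cases in which a prescribed factor would coincide with the identity of $H$ (and so leave $A$ or $B$). Once that enumeration is settled, the two applications of Lemma~\ref{lem:znzn} dispose of $U_1$ and $U_2$ cleanly.
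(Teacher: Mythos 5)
Your proposal is correct and follows essentially the same strategy as the paper: two inverse-closed one-parameter families of size $p-1$ carrying the $\Z_2$-parities $0$ and $1$, three-letter words (with reordered variants for the degenerate hypersurfaces $b=0$ and $b=ac$) covering the generic elements, and two applications of Lemma~\ref{lem:znzn} to patch the residual abelian subgroups $\{(M(0,b,c),0)\}$ and $\{(M(a,b,0),\cdot)\}$, giving $|S|=2p+O(p^{2/3})$. The only difference is the specific choice of generator families ($M(a,0,0)$ and $M(0,0,c)$ versus the paper's $M(x,x,0)$ and $M(0,x,x)$), and your factorisations check out, including the implicit coverage of the involution $(I,1)$ by a $BBB$ word.
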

\begin{proof}
 We construct our generating set $S$ for $G$ as follows. 
For each $x\in GF(p)^*$ we define the following elements of $G$.
\[\alpha_x=\left(\begin{pmatrix}1&x&x\\0&1&0\\0&0&1\end{pmatrix},0\right);\qquad\beta_x=\left(\begin{pmatrix}1&0&x\\0&1&x\\0&0&1\end{pmatrix},1\right)\]
Let $S_1$ be the set consisting of $\alpha_x$ and $\beta_x$ for all non-zero $x\in GF(p)$. Notice that $S_1$ contains neither the identity nor the involution.
We now show that all elements of $G$ of the forms 
$\left(\begin{pmatrix}1&a&b\\0&1&c\\0&0&1\end{pmatrix},0\right),a\neq 0$ and $\left(\begin{pmatrix}1&a&b\\0&1&c\\0&0&1\end{pmatrix},1\right),c\neq 0$
may be expressed as a product of at most 3 elements from $S_1$.

First consider $X=\left(\begin{pmatrix}1&a&b\\0&1&c\\0&0&1\end{pmatrix},0\right),a\neq 0$. There are three cases to consider.
If $b=a+c$ then we choose any $u\notin\{0,c\}$ and then $X=\beta_u\beta_{c-u}\alpha_a$.
Otherwise if $b=a+c+ac$ then again we choose $u\notin\{0,c\}$ and this time $X=\alpha_a\beta_u\beta_{c-u}$.
Otherwise we let $x=c-(b-c)/a+1;y=a;z=(b-c)/a-1$ and then $X=\beta_x\alpha_y\beta_z$.

Now consider $X=\left(\begin{pmatrix}1&a&b\\0&1&c\\0&0&1\end{pmatrix},1\right),c\neq 0$. 
Let $x=(b-a)/c-1;y=c;z=a-(b-a)/c+1$.
If $b=a+c$ then $X=\beta_y\alpha_z$. 
Otherwise if $b=a+c+ac$ then $X=\alpha_x\beta_y$.
Otherwise $X=\alpha_x\beta_y\alpha_z$.

Now we deal with the remaining cases. The elements of the form $\left(\begin{pmatrix}1&0&b\\0&1&c\\0&0&1\end{pmatrix},0\right)$
form a subgroup of $G$ isomorphic to $\Z_p\times\Z_p$.
By Lemma~\ref{lem:znzn} there is a set $S_2$ of size $9p^{2/3}+o(p^{2/3})$ such that each of these can be expressed as a product of at most 3 elements of $S_2$.

Finally, the elements of the form $\left(\begin{pmatrix}1&a&b\\0&1&0\\0&0&1\end{pmatrix},1\right)$ are contained in a subgroup of $G$
isomorphic to $\Z_p\times\Z_p\times\Z_2$.
In a similar way, we can find a set $S_3$ of size $18p^{2/3}+o(p^{2/3})$ such that each of these can be expressed as a product of at most 3 elements of $S_3$.
Letting $S=S_1\cup S_2\cup S_3$ we see that $\Cay(G,S)$ has diameter at most 3 and $|S|=2p+O(p^{2/3})$ as required.
\end{proof}

The main result now follows. 
\begin{theorem}\label{thm:matrices}
In the class of general Cayley graphs, 
\[
L^-(3)\geq\frac{1}{4}
\]
\end{theorem}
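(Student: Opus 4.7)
The plan is to combine the two lemmas that precede the theorem: Lemma~\ref{lem:zpzpzp} provides the asymptotic construction for each odd prime $p$, and Lemma~\ref{lem:prime} is exactly designed to convert such a prime-parametrised construction into a lower bound on $L^-(k)$ valid for all sufficiently large degrees $d$. So the proof really amounts to verifying the hypotheses of Lemma~\ref{lem:prime} and reading off the correct constants.

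First I would let $\mathcal{G}$ be the family of groups $\{G(p) = H \times \Z_2 : p \text{ an odd prime}\}$, where $H$ is the upper unitriangular subgroup of $SL(3,p)$ as in Lemma~\ref{lem:zpzpzp}, and take $S(p)$ to be the inverse-closed generating set constructed there. I would then check the hypotheses of Lemma~\ref{lem:prime} with $k=3$: the Cayley graph $\Cay(G(p), S(p))$ has diameter $3$, the set $S(p)$ is inverse-closed, and by the final clause of Lemma~\ref{lem:zpzpzp}, $S(p)$ avoids the unique involution of $G(p)$, so $G(p) \setminus S(p)$ contains at least one involution. This involution is exactly what Lemma~\ref{lem:prime} needs in order to pad the generating set by an arbitrary number (even or odd) of extra inverse-closed elements when bridging the gap between consecutive usable primes.

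Next I would identify the constants $C$ and $D$. Since $|H| = p^3$, we have $|G(p)| = 2p^3$, giving $C = 2$. Lemma~\ref{lem:zpzpzp} states $|S(p)| = 2p + O(p^{2/3})$, so $D = 2$. Plugging into the conclusion of Lemma~\ref{lem:prime} yields
\[
L^-(3) \geq \frac{C}{D^{k}} = \frac{2}{2^{3}} = \frac{1}{4},
\]
which is the desired inequality.

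There is essentially no remaining obstacle at this step: all the genuine work has already been done, namely the explicit factorisation arguments in the three cases of Lemma~\ref{lem:zpzpzp} (covering elements with $a \neq 0$ in the first component, the $\alpha$-free layer, and the $\beta$-free layer of $G$), together with the Baker--Harman--Pintz prime gap estimate invoked inside Lemma~\ref{lem:prime}. The theorem itself is simply a bookkeeping step stitching these two results together.
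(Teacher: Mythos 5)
Your proposal is correct and matches the paper's own proof, which likewise just observes that the graphs of Lemma~\ref{lem:zpzpzp} have order $2p^3$ and degree $2p+O(p^{2/3})$ and satisfy the hypotheses of Lemma~\ref{lem:prime}, giving $C/D^k = 2/2^3 = 1/4$. Your version simply spells out the verification of the hypotheses (inverse-closure, the spare involution, and the constants $C=D=2$) in more detail than the paper does.
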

\begin{proof}
The graphs in Lemma~\ref{lem:zpzpzp} have order $2p^3$ and degree $2p+O(p^{2/3})$, and satisfy the conditions of Lemma~\ref{lem:prime}.
\end{proof}

\section{A semidirect product construction}\label{sec:semidirect}
\subsection{Motivation}
The results of Section~\ref{sec:matgroups} provide a useful improvement to the asymptotic bound at diameter 3 from Vetr\'ik's $3/16$~\cite{vetrik345} to $1/4$.
For larger diameters, it is possible that other subgroups of matrix groups might yield interesting results.
However, it is unlikely that a general construction covering a range of diameters would be possible with this approach.

To find a more general construction, we are inspired by the ideas of Vetr\'ik~\cite{vetrik345}, Macbeth et al~\cite{macbeth2010large},
Bevan~\cite{BevDeBruijn} and others.
A common strategy of such constructions for a given diameter $k$ is to begin with a $k$-fold direct product of some group $H$, 
and then to permute the coordinate positions in the direct product by means of a semidirect product of $H^k$ by some other group $K$.

We recall first the definition of a semidirect product, choosing here a notation convenient for our needs.
Given two groups $G$ and $K$ and a group homomorphism $\varphi:K\to\Aut(G)$, the semidirect product $G\rtimes_{\varphi}K$
is the group with element set the Cartesian product $G\times K$ and multiplication defined by:
\[
(g_1,k_1)(g_2,k_2)=(g_1^{\varphi(k_2)}g_2,k_1 k_2)
\]
where the superscript on $g_1$ indicates the image of $g_1$ under the automorphism $\varphi(k_2)$ of $G$.

In a $k$-fold direct product $H^k$, any permutation of the $k$ coordinate positions is an automorphism of the group.
These automorphisms of $H^k$ form a subgroup of its full automorphism group, this subgroup being isomorphic to the symmetric group $S_k$.
We restrict ourselves in our semidirect products $H^k\rtimes_{\varphi}K$ to homomorphisms $\varphi$ into this restricted subgroup.

Our goal is again to find a lower bound on the quantity $L^-(k)$, as defined in Section~\ref{sec:intro}, for certain fixed values of $k$.
To achieve this we first fix a diameter $k$, and then try to construct an infinite sequence of Cayley graphs of degree $sm$ and asymptotic order $m^k n$ 
for every $m\geq 2$ and for some fixed constants $n,s$.

We begin the discussion with an example at diameter 6 which should help clarify the overall method.

\subsection{Diameter 6 example}\label{sec:diam6}
Let $H$ be an abelian group of order $m$. For the purposes of our construction we take $H=\Z_m$ and use additive notation for the group operation.
Let $k=6$ and denote the 6-fold direct product of $H$ by $H^6$.
Let $\rho$ be the automorphism of $H^6$ which maps the element $(x_1,x_2,x_3,x_4,x_5,x_6)$ to $(x_6,x_1,x_2,x_3,x_4,x_5)$.
Let $K$ be the group $\Z_{36}$ and let $\varphi:K\to \Aut(H^k)$ be the group homomorphism given by $\varphi(r)=\rho^r$.
Let $G=H^6\rtimes_{\varphi}K$.

We write the elements of $G$ in the form $(x_1,x_2,x_3,x_4,x_5,x_6;y)$ where each $x_i\in H$ and $y\in K$.
We construct our generating set as follows. For each $x\in H$ we define:
\begin{align*}
a(x)&=(0,0,0,0,0,x;1)\\
A(x)&=(0,0,0,0,x,0;-1)\\
b(x)&=(0,x,x,0,0,x;4)\\
B(x)&=(0,x,0,x,x,0;-4)
\end{align*}

Then the generating set is:
\[
X=\bigcup_{x\in H}\{a(x),A(x),b(x),B(x)\}
\]

Note that since $a(x)^{-1}=A(-x)$ and $b(x)^{-1}=B(-x)$, the set $S$ is inverse-closed.

We claim that the graph $\Cay(G,X)$ has diameter 6. 
To substantiate this, it suffices to show that every element of $G$ can be expressed as a product of at most 6 elements of $X$.
For a given $y\in K$, we can express the element $(x_1,x_2,x_3,x_4,x_5,x_6;y)\in G$ via the products in Figure~\ref{fig:soldiam6}.
For $y=18\ldots 35$, we may obtain expressions simply by inverting the appropriate products.
Thus $\Cay(G,X)$ has diameter 6 as claimed.

\begin{figure}\small
\begin{align*}
y=0:\  & a(-x_2+x_3+x_5)a(-x_2+x_3+x_4)b(x_3)\\[-0.5em]
 & A(-x_3+x_6)A(x_1)B(x_2-x_3)\\
y=1:\  & b(x_3)b(x_2-x_4)A(-x_2-x_3+x_5)\\[-0.5em]
 & A(-x_3+x_6)A(x_1-x_2+x_4)B(x_4)\\
y=2:\  & b(x_6)a(x_3-x_4+x_6)a(x_2)\\[-0.5em]
 & a(x_1-x_4)B(x_4-x_6)A(x_5)\\
y=3:\  & a(x_2)a(x_1)b(x_6)\\[-0.5em]
 & A(x_3-x_6)A(x_4)A(x_5-x_6)\\
y=4:\  & a(-x_1+x_3)a(x_1+x_2-x_4)B(-x_1+x_4)\\[-0.5em]
 & a(x_1-x_4+x_5)b(x_1)a(x_6)\\
y=5:\  & a(x_2+x_4+x_5-x_6)a(-x_2+x_3-x_5)B(x_5)\\[-0.5em]
 & A(x_1+x_2+x_5-x_6)b(-x_2-x_5+x_6)b(x_2)\\
y=6:\  & a(x_5)a(x_4)a(x_3)\\[-0.5em]
 & a(x_2)a(x_1)a(x_6)\\
y=7:\  & a(x_1-x_3+x_6)a(2x_1-x_3-x_4+x_5)b(x_1)\\[-0.5em]
 & b(-x_1+x_3)a(x_1+x_2-x_4)B(-x_1+x_4)\\
y=8:\  & a(x_1+x_3-x_6)b(x_3)a(x_2)\\[-0.5em]
 & b(-x_3+x_6)A(x_3+x_4-x_6)A(-x_3+x_5)\\
y=9:\  & a(x_2-x_3)a(x_1)a(-x_3+x_6)\\[-0.5em]
 & a(x_5)a(x_4)b(x_3)\\
y=10:\  & a(x_3)b(x_2-x_4)A(-x_2+x_5)\\[-0.5em]
 & b(x_4)a(x_1-x_2+x_4)a(x_6)\\
y=11:\  & a(x_4)b(-x_1+x_2)A(-x_3+x_6)\\[-0.5em]
 & b(x_1-x_2+x_3)b(x_1)A(-2x_1+x_2-x_3+x_5)\\
y=12:\  & a(x_5)a(-x_1+x_4)a(x_1+x_3-x_6)\\[-0.5em]
 & a(x_1+x_2-x_6)b(x_1)b(-x_1+x_6)\\
y=13:\  & a(-2x_2+x_3-x_4+x_6)a(x_5)b(-x_2+x_3)\\[-0.5em]
 & A(x_1-x_4)b(x_2-x_3+x_4)b(x_2)\\
y=14:\  & b(-x_3+x_6)b(x_3)b(x_3+x_4-x_6)\\[-0.5em]
 & A(-x_1+x_2-3x_3-x_4+2x_6)b(x_1+x_3-x_6)A(-x_1-2x_3-x_4+x_5+2x_6)\\
y=15:\  & a(x_2-x_3-x_5)a(x_1-x_4+x_5)a(-x_3-x_4+x_5+x_6)\\[-0.5em]
 & b(x_5)b(x_4-x_5)b(x_3)\\
y=16:\  & a(-2x_2+x_3-x_4+2x_5+x_6)b(x_5)b(x_2+x_4-x_5-x_6)\\[-0.5em]
 & A(x_1-x_4-x_5)b(-x_2+x_5+x_6)b(x_2-x_5)\\
y=17:\  & B(x_1-2x_2-x_3+2x_5-x_6)B(-x_2+x_5)B(x_2+x_3-x_5)\\[-0.5em]
 & a(x_1-5x_2-2x_3+x_4+4x_5-2x_6)B(x_2-x_5+x_6)B(-x_1+3x_2+x_3-2x_5+x_6)
\end{align*}
\caption{Solution for diameter 6}
\label{fig:soldiam6}
\end{figure}

To illustrate the multiplication rules we show here an example from the solution. In the case of $y=3$ we clam that
\[(x_1,x_2,x_3,x_4,x_5,x_6;3)=a(x_2)a(x_1)b(x_6)A(x_3-x_6)A(x_4)A(x_5-x_6).\]
Expanding the right hand side one step at a time we get the following.
\begin{align*}
&a(x_2)a(x_1)\\
&\quad=(0,0,0,0,0,x_2;1)(0,0,0,0,0,x_1;1)
\end{align*}
The multiplication rule is that we rotate the first 6 coordinates of the first term by the final coordinate of the second term, then add. So we get:
\begin{align*}
&a(x_2)a(x_1)\\
&\quad=(x_2,0,0,0,0,x_1;2)
\end{align*}
We continue in this way.
\begin{align*}
&a(x_2)a(x_1)b(x_6)\\
&\quad=(x_2,0,0,0,0,x_1;2)(0,x_6,x_6,0,0,x_6;4)\\
&\quad=(0,x_6,x_6,x_1,x_2,x_6;6)
\end{align*}
\begin{align*}
&a(x_2)a(x_1)b(x_6)A(x_3-x_6)\\
&\quad=(0,x_6,x_6,x_1,x_2,x_6;6)(0,0,0,0,x_3-x_6,0;-1)\\
&\quad=(x_6,x_6,x_1,x_2,x_3,0;5)
\end{align*}
\begin{align*}
&a(x_2)a(x_1)b(x_6)A(x_3-x_6)A(x_4)\\
&\quad=(x_6,x_6,x_1,x_2,x_3,0;5)(0,0,0,0,x_4,0;-1)\\
&\quad=(x_6,x_1,x_2,x_3,x_4,x_6;4)
\end{align*}
\begin{align*}
&a(x_2)a(x_1)b(x_6)A(x_3-x_6)A(x_4)A(x_5-x_6)\\
&\quad=(x_6,x_1,x_2,x_3,x_4,x_6;4)(0,0,0,0,x_5-x_6,0;-1)\\
&\quad=(x_1,x_2,x_3,x_4,x_5,x_6;3)
\end{align*}

Since $|G|=36m^6$ and $|X|=4m$, it follows that for every degree $d$ of the form $4m$, there exists a Cayley graph of diameter 6
and order $36d^6/4^6$.
To cover the cases $d\equiv 1,2,3\pmod 4$ we may simply add one more involution from $G$ and/or one more pair of mutually inverse elements to our set $X$.
We have therefore proved the following result.
\begin{proposition}
In the class of Cayley graphs,
\[
L^-(6)\geq \frac{36}{4^6}\approx 0.00878
\]
\end{proposition}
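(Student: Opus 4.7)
The proposition is essentially the asymptotic consequence of the construction just built. My plan is to take the semidirect product $G=H^6\rtimes_\varphi K$ with $H=\Z_m$ and $K=\Z_{36}$, together with the inverse-closed generating set $X=\bigcup_{x\in H}\{a(x),A(x),b(x),B(x)\}$ exhibited above. First I would record the two numerical facts $|G|=36\,m^6$ and $|X|=4m$, both of which are immediate from the definitions, and then invoke the case analysis of Figure~\ref{fig:soldiam6}: for each residue $y\in\{0,1,\dots,17\}$ we have written a product of at most six generators representing the general element $(x_1,\dots,x_6;y)$, and inverse-closure of $X$ handles the residues $y\in\{18,\dots,35\}$ by reversing and inverting these words. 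That gives diameter at most $6$, and since $|G|\geq (d-1)^6$ asymptotically the diameter cannot be less than $6$ (though we do not need this lower bound for the statement).

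Next I would translate this into the desired bound on $L^-(6)$. For every $m\geq 2$ the construction yields a Cayley graph of degree $d=4m$, diameter $6$ and order $36m^6=36\cdot d^6/4^6$, which already proves $L^+(6)\geq 36/4^6$ along the subsequence of degrees divisible by $4$. To upgrade this to $L^-(6)$ I need to cover the remaining residues $d\equiv 1,2,3\pmod 4$. The standard trick is to pick, for a given large $d$, the unique $m$ with $4m\leq d<4(m+1)$ and then enlarge $X$ by adjoining $d-4m\in\{0,1,2,3\}$ further generators: a single involution of $G$ (for example any element of order $2$ in the $\Z_2$ factor that arises from $K=\Z_{36}$, or in $H^6$ when $m$ is even; when neither is available one can adjoin a pair of mutually inverse elements and a single involution as needed), together with at most one pair of mutually inverse non-involutions. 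Adding generators never increases the diameter and changes the order by the multiplicative factor $(d/4m)^6=1+o(1)$, so the resulting graph has order $\bigl(36/4^6\bigr)d^6+o(d^6)$ and the claimed bound follows by taking $d\to\infty$.

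The only step that requires any care is the padding argument, and specifically the claim that involutions are available in $G$ to handle odd values of $d-4m$. The potential obstacle is that for certain $m$ the group $G$ might fail to have a convenient involution outside $X$; however, because $K=\Z_{36}$ always contains the involution $(0,\dots,0;18)$ and this element lies outside $X$ (all elements of $X$ have $K$-coordinate in $\{\pm 1,\pm 4\}$), there is always an involution to call upon, and more generally $H^6$ supplies further involutions whenever $m$ is even. Once this is in place, the proposition follows directly from the asymptotic identity $|G|/|X|^6=36/4^6$.
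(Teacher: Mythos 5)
Your proposal is correct and follows essentially the same route as the paper: it records $|G|=36m^6$ and $|X|=4m$, relies on the case analysis of Figure~\ref{fig:soldiam6} (with inverse-closure handling $y=18,\ldots,35$) to get diameter at most $6$, and covers degrees $d\not\equiv 0\pmod 4$ by adjoining an involution such as $(0,\ldots,0;18)$ and/or a mutually inverse pair, which is exactly the paper's padding argument.
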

This is, as far as we know, the first specific result for diameter 6 and is an improvement on the bound of $6/3^6\approx 0.00823$ from \cite{macbeth2010large}.
However, the method is capable of generalisation and we now describe the full construction.

\subsection{The general construction}
We begin by drawing out the key features of the construction in Section~\ref{sec:diam6}.
Recall that $H=\Z_m$, $K=\Z_{36}$ and $k=6$. We define $n=|K|$, so $n=36$ in our example.
Finally, $\varphi:K\to \Aut(H^k)$ is the group homomorphism given by $\varphi(r)=\rho^r$ and $G=H^k\rtimes_{\varphi}K$.

Our generating set was constructed as follows. We have a set $S=\{s_1,s_2,s_3,s_4\}=\{1,-1,4,-4\}$ which is a subset of $K$ of cardinality 4. 
It can readily be checked that the set $S$ has the property that every element of $K$ can be expressed as a sum of exactly $k$ elements of $S$. 
Moreover, the sums satisfy the further restriction that no element of $S$ appears consecutively with its inverse.
For example, from the table above for the case $y=3$ we have $3=1+1+4-1-1-1$.

We have a set $V=\{v_1,v_2,v_3,v_4\}=\{000001,000010,011001,010110\}$ of 4 non-zero 0/1 vectors of length $k=6$. 
This set has two important properties. The first is that $v_2=v_1^{\rho^{-s_1}}$ and $v_4=v_1^{\rho^{-s_3}}$, 
where as before $\rho$ represents a right rotation of one place in the coordinates. 
This ensures that our generating set defined below will be inverse-closed.
The second is that the vectors have been carefully chosen to ensure that our eventual graph will have diameter 6.

For every $x\in H$, we define $v_i(x)$ to be the element of $H^k$ with $x$ in every coordinate position where $v_i$ has a 1, and 0 otherwise. 
We now define our generating set $X$ to consist of four sets of elements of $G$ as follows.
\begin{align*}
a(x)=(v_1(x);s_1) \quad&\text{for all }x\in H\\
A(x)=(v_2(x);s_2) \quad&\text{for all }x\in H\\
b(x)=(v_3(x);s_3) \quad&\text{for all }x\in H\\
B(x)=(v_4(x);s_4) \quad&\text{for all }x\in H\\
\end{align*}

\vspace*{-4ex}
Note that because of the forms of the vectors $v_i$ explained above, we have $a(x)^{-1}=A(-x)$, $b(x)^{-1}=B(-x)$ and so $X$ is an inverse-closed subset of $G$.

The most awkward part of the process is to determine how to express any possible element of our group $G$ as a product of $k$ of our generators.
To determine how this can be done we proceed as follows.
Let $\mathbf{x}=(x_1,x_2,\ldots,x_k)$ be an arbitrary element of $H^k$.
We must show, for each $i=0\ldots n-1$, that we can express any element of $G$ of the form $(\mathbf{x};i)$ as a product of $k$ generators.
Since the generator set is inverse-closed we need only check $i\leq\lfloor\frac{n}{2}\rfloor$.
To find products which work we proceed as follows for each such $i$.
\begin{enumerate}[a]
	\item Find all possible ways in which $i$ can be expressed as a sum of $k$ elements of $S$ (ignoring order in the sum).
	\item Find all unique ways to order the elements in this sum, say $T=(t_1,t_2,\ldots,t_k)$ with each $t_j\in S$ and $\sum t_j=i$.
	We insist also that $t_{j+1}\neq -t_j$ for $j=1\ldots k-1$.
	\item For each $T$, find the vector $U=(u_1,u_2,\ldots,u_k)$ of $k$ numbers chosen from $\{1,2,3,4\}$ such that $t_j=s_{u_j}$ for each $j$.
	That is to say, we identify in order those elements of $S$ involved in the sum. At this point we know our product must have the form
	$(v_{u_1}(y_1);s_{u_1})(v_{u_2}(y_2);s_{u_2})\cdots(v_{u_k}(y_k);s_{u_k})$ for some $\mathbf{y}=(y_1,y_2,\ldots,y_k)$.
	\item To determine whether there is a solution, we compute the \emph{mapping matrix} $M$ such that $\mathbf{y}M=\mathbf{x}$.
	If $M$ is invertible over $\Z$ (i.e. it has determinant $\pm 1$) we have found a solution for $i$, otherwise we proceed with the search.
\end{enumerate}
In the final step, it is easy to see that the mapping matrix $M$ has the following form:
\[M=\begin{pmatrix}v_{u_1}^{\rho^{r_1}}\\[1em]v_{u_2}^{\rho^{r_2}}\\[1em]\vdots\\[1em]v_{u_k}^{\rho^{r_k}} \end{pmatrix}^T;\qquad r_w=\sum_{j<w}t_j\]

The elements of this construction which can be generalised are as follows.
\begin{enumerate}[(i)]
	\item The target diameter of our Cayley graph could be any $k>2$.
	\item The group $K$ could be an arbitrary group of order $n$ rather than being restricted to cyclic groups.
	\item The size $|S|$ of the set of elements of $K$ need not be 4.
	\item The homomorphism $\varphi$ in the semidirect product could be any non-trivial homomorphism from our group $K$ to the group of coordinate
	permutations of $H^k$.
	\item Our set $V$ of 0/1 vectors could be any set, provided the resulting set of generators is inverse-closed.
\end{enumerate}

It is clear that with the large number of variables, and the relatively complex nature of the construction,
some form of automated search for feasible solutions is essential.
We outline the search algorithm below. We begin with the following inputs:
\begin{itemize}
	\item A target diameter $k$.
	\item A set size $s=|S|$.
	\item A target order $n$ for our group $K$.
\end{itemize}

Given these parameters, we run the search using a GAP~\cite{GAP4} script as follows.
\begin{enumerate}[1.]
	\item Find all groups $K$ of order $n$ from the small groups library.
	\item For each $K$, find (up to conjugacy) all possible homomorphisms $\varphi$ from $K$ to $S_k$. 
	(To avoid trivial cases, we consider only homomorphisms whose image has no fixed point.)
	\item For each $K$, find all sets $S$ of size $s$ with the property that any element of $K$ can be written as a product of exactly $k$ elements of $S$.
	\item For each combination of $\varphi$ and $S$, find all possible sets $V=\{v_1,\ldots,v_s\}$ of 0/1 vectors of length $k$,
	such that, given the elements of $S$, the resulting generating set will be inverse-closed.
\end{enumerate}

For each viable combination found, we then search for a solution using a modified version of the diameter 6 example.
So for each element $i\in K$ we test whether the following procedure succeeds.
\begin{enumerate}[(a)]
	\item Find all ways to express $i$ as a product of $k$ elements of $S$, say $T=(t_1,t_2,\ldots,t_k)$ with each $t_j\in S$ and $\sum t_j=i$.
	As before, we insist that $t_{j+1}\neq t_j^{-1}$ for $j=1\ldots k-1$.
	\item For each $T$, compute the vector $U$ of $k$ numbers chosen from $1\ldots s$ such that $t_j=s_{u_j}$ for each $j$.
	So we know our product must have the form
	$(v_{u_1}(y_1);s_{u_1})(v_{u_2}(y_2);s_{u_2})\cdots(v_{u_k}(y_k);s_{u_k})$ for some $\mathbf{y}=(y_1,y_2,\ldots,y_k)$.
	\item To determine whether there is a solution we again compute the mapping matrix $M$ such that $\mathbf{y}M=\mathbf{x}$.
	If $M$ is invertible over $\Z$ we have found a solution for $i$, otherwise we proceed with the search.
\end{enumerate}
If this procedure finds a solution for all $i\in K$, then our search has yielded a positive result.

When a solution has been found, we know that for any $m$, we can create a Cayley graph of diameter $k$, order $m^k n$ and degree $sm$.
Thus by the same argument as in the diameter 6 example, we will have proved that in the class of Cayley graphs:
\[
L^-(k)\geq\frac{n}{s^k}
\]
The object now is to choose the parameters for the search in such a way that we can improve the existing asymptotic bounds.
The following sections describe our best results.

\subsection{Undirected graphs}
\subsubsection{Diameters 2 and 3}
For diameter 2, our method will never produce a useful result. This is because our construction requires us to be able to express every element
of our group $K$ as a product of $k$ elements chosen from $S$ so that no element follows its inverse in the product.
With $k=2$ this is clearly impossible.

For diameter 3, our best results for set sizes $s=4,5,6,7$ are shown in Table~\ref{tab:sddiam3}. (There were no useful solutions with $s=3$.)
The best existing published result is by Vetr\'ik~\cite{vetrik345} giving $L^-(3)\geq\frac{3}{16}$.
Although our results improve on that, we are unable to do better than the specific diameter 3 construction from Section~\ref{sec:matgroups} above.
\begin{table}\centering
\begin{tabular}{|cccc|}
\hline
Set size $s$ & Group order $n$ & Group $K$ & $L^-(3)$ bound\\
\hline
4 & 12 & $\Z_{12}$ & $12/4^3\approx 0.18750$\\
5 & 24 & $S_4$ & $24/5^3\approx 0.19200$\\
6 & 48 & $(\Z_4\times\Z_4)\rtimes\Z_3$ & $48/6^3\approx 0.22222$\\
7 & 72 & $(\Z_2^2\rtimes\Z_9)\rtimes\Z_2$ & $72/7^3\approx 0.20991$\\
\hline
\end{tabular}
\caption{Best results for undirected graphs of diameter 3}
\label{tab:sddiam3}
\end{table}

\subsubsection{Diameter 4}
For diameter 4, the increasing size of the search space means that we were only able to search for solutions with set sizes of 3, 4 and 5.
The results are summarised in Table~\ref{tab:sddiam4}.
The best existing published result is again by Vetr\'ik~\cite{vetrik345} giving $L^-(4)\geq\frac{32}{5^4}\approx 0.05120$.
For set sizes 4 and 5, we obtain results better than that bound.

\begin{table}\centering
\begin{tabular}{|cccc|}
\hline
Set size $s$ & Group order $n$ & Group $K$ & $L^-(4)$ bound\\
\hline
3 & 4 & $\Z_4$ & $4/3^4\approx 0.04938$\\
4 & 24 & $S_4$ & $24/4^4\approx 0.09375$\\
5 & 60 & $\Z_{15}\rtimes\Z_4$ & $60/5^4\approx 0.09600$\\
\hline
\end{tabular}
\caption{Best results for undirected graphs of diameter 4}
\label{tab:sddiam4}
\end{table}

We note that in contrast to the diameter 6 example construction above which used a cyclic group $K$, the groups found by the computer search are
not at all obvious and the combination of group $K$, homomorphism $\varphi$, set $S$ and vectors $V$ lead to a solution which is complex and lengthy to tabulate.
For reasons of brevity therefore we omit all the full solutions here; the interested reader will find complete tabulations of the optimal solutions for diameters 3--7 in an ancillary file at~\cite{ErskineTuite2016}.

The simplest solution to describe, although not the one yielding the largest value, uses a set of size 4. 
In this case we are fortunate that the group is $S_4$ and the homomorphism $\varphi$ is simply the identity mapping.
We therefore illustrate the results by tabulating this solution below in the same format as our diameter 6 example in Figure~\ref{fig:soldiam6}.

\begin{figure}
\begin{align*}
K&=S_4\\
S&=\{(2\ 3\ 4), (2\ 4\ 3), (3\ 4), (1\ 2)\}\\
V&=\{1010,1100,0100,1110\}\\
a(x)&=(x,0,x,0;(2\ 3\ 4))\\
A(x)&=(x,x,0,0;(2\ 4\ 3))\\
b(x)&=(0,x,0,0;(3\ 4))\\
c(x)&=(x,x,x,0;(1\ 2))\\
y=():\quad & b(x_2-x_3-x_4)c(x_4)b(x_1-x_3-x_4)c(x_3)\\
y=(1\ 2):\quad & a(x_2-x_3-x_4)a(x_4)a(-x_1+x_2-x_4)c(x_1-x_2+x_3+x_4)\\
y=(1\ 3):\quad & b(x_2-x_3)c(x_4)a(-x_1+x_3-x_4)c(x_1)\\
y=(1\ 4):\quad & b(-x_1+x_2)c(x_4)A(x_1-x_3-x_4)c(x_3)\\
y=(2\ 3):\quad & A(x_2-x_3-x_4)c(x_4)b(x_1-x_2)c(x_3)\\
y=(2\ 4):\quad & a(x_4)a(x_2)b(-x_1+x_2+x_3+x_4)a(x_1-x_2-x_4)\\
y=(3\ 4):\quad & b(-x_1+x_2+x_3+x_4)a(x_1-x_3-x_4)a(x_4)a(x_3)\\
y=(1\ 2)(3\ 4):\quad & A(-x_1+x_2)b(x_1-x_2+x_4)A(x_1-x_3)c(x_3)\\
y=(1\ 3)(2\ 4):\quad & a(-x_2+x_3)c(x_2-x_3+x_4)a(-x_1+x_3-x_4)c(x_1)\\
y=(1\ 4)(2\ 3):\quad & b(x_3)A(x_1-x_2)c(-x_1+x_2+x_4)A(x_1-x_4)\\
y=(1\ 2\ 3):\quad & a(-x_1+x_2)b(x_4)A(x_2-x_3)c(x_1-x_2+x_3)\\
y=(1\ 3\ 2):\quad & b(x_1-x_2-x_4)c(x_3)A(-x_3+x_4)A(x_2)\\
y=(1\ 2\ 4):\quad & b(-x_2+x_3+x_4)a(x_2-x_3)a(-x_1+x_3)c(x_1)\\
y=(1\ 4\ 2):\quad & b(x_1-x_3-x_4)c(x_2)a(-x_2+x_4)a(x_3)\\
y=(1\ 3\ 4):\quad & a(x_2)b(x_1+x_2-x_3)c(x_4)a(-x_2+x_3-x_4)\\
y=(1\ 4\ 3):\quad & A(x_1-x_2-x_3)c(x_3)b(-x_1+x_2+x_4)A(x_2)\\
y=(2\ 3\ 4):\quad & b(-x_1+x_2+x_3)a(x_1-x_2)b(x_4)A(x_2)\\
y=(2\ 4\ 3):\quad & b(x_4)A(x_3)A(x_1-x_3)b(-x_1+x_2+x_3)\\
y=(1\ 2\ 3\ 4):\quad & b(x_3)A(x_1-x_4)c(x_4)b(-x_1+x_2)\\
y=(1\ 4\ 3\ 2):\quad & b(x_1-x_2-x_3)c(x_3)b(-x_3+x_4)A(x_2)\\
y=(1\ 2\ 4\ 3):\quad & b(-x_2+x_3+x_4)a(x_2-x_3)b(x_1-x_3)c(x_3)\\
y=(1\ 3\ 4\ 2):\quad & c(x_4)b(x_3-x_4)A(x_1-x_4)b(-x_1+x_2+x_4)\\
y=(1\ 3\ 2\ 4):\quad & a(-x_1+x_2+x_4)c(x_1-x_2+x_3-x_4)A(-x_3+x_4)A(x_2)\\
y=(1\ 4\ 2\ 3):\quad & a(x_3)a(-x_1+x_2)c(x_1-x_2-x_3+x_4)A(x_2+x_3-x_4)
\end{align*}
\caption{Solution for diameter 4}
\label{fig:soldiam4}
\end{figure}

\subsubsection{Diameter 5}
For diameter 5 we were able to search for solutions with set sizes of 3 and 4, with the results summarised in Table~\ref{tab:sddiam5}.
As before, the best existing published result is by Vetr\'ik~\cite{vetrik345} giving $L^-(5)\geq\frac{25}{4^5}\approx 0.02441$.
For set size 3, we have a marginal improvement and at set size 4 our best solution is a substantial increase.

\begin{table}\centering
\begin{tabular}{|cccc|}
\hline
Set size $s$ & Group order $n$ & Group $K$ & $L^-(5)$ bound\\
\hline
3 & 6 & $S_3$ & $6/3^5\approx 0.02469$\\
4 & 60 & $A_5$ & $60/4^5\approx 0.05859$\\
\hline
\end{tabular}
\caption{Best results for undirected graphs of diameter 5}
\label{tab:sddiam5}
\end{table}

\subsubsection{Diameters 6 and 7}
For diameters 6 and 7 we were again able to search for solutions with set sizes of 3 and 4, 
with the results summarised in Tables~\ref{tab:sddiam6} and~\ref{tab:sddiam7}.
There are no specific published results at diameters 6 and 7.
The best available published result comes from the general construction of Macbeth, \v{S}iagiov\'{a}, \v{S}ir\'{a}\v{n} and Vetr\'{i}k~\cite{macbeth2010large}
which yields $L^-(6)\geq\frac{6}{3^6}\approx 0.00823$ and $L^-(7)\geq\frac{7}{3^7}\approx 0.00320$.

Recall that our diameter 6 example with a set size of 4 already yielded an improvement to $\approx 0.00878$,
but with the aid of the computer search we are able to more than double this figure.
At diameter 7 our best result is now more than three times that obtained by the published general construction.
\begin{table}\centering
\begin{tabular}{|cccc|}
\hline
Set size $s$ & Group order $n$ & Group $K$ & $L^-(6)$ bound\\
\hline
3 & 12 & $A_4$ & $12/3^6\approx 0.01646$\\
4 & 78 & $\Z_2\times(\Z_{13}\rtimes\Z_3)$ & $78/4^6\approx 0.01904$\\
\hline
\end{tabular}
\caption{Best results for undirected graphs of diameter 6}
\label{tab:sddiam6}
\end{table}

\begin{table}\centering
\begin{tabular}{|cccc|}
\hline
Set size $s$ & Group order $n$ & Group $K$ & $L^-(7)$ bound\\
\hline
3 & 14 & $D_{14}$ & $14/3^7\approx 0.00640$\\
4 & 168 & $\Z_8\times(\Z_7\rtimes\Z_3)$ & $168/4^7\approx 0.01025$\\
\hline
\end{tabular}
\caption{Best results for undirected graphs of diameter 7}
\label{tab:sddiam7}
\end{table}

\subsubsection{Summary}
We collect the results above into a single theorem.
\begin{theorem}\label{thm:smalldiam}
In the class of undirected Cayley graphs,
\begin{align*}
L^-(4)&\geq\frac{60}{5^4}\approx 0.09600\\
L^-(5)&\geq\frac{60}{4^5}\approx 0.05859\\
L^-(6)&\geq\frac{78}{4^6}\approx 0.01904\\
L^-(7)&\geq\frac{168}{4^7}\approx 0.01025\\
\end{align*}
\end{theorem}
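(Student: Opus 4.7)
The plan is to apply the general semidirect product construction of Section~3.3 four times, once for each diameter $k\in\{4,5,6,7\}$, using the specific parameters identified by the automated search and recorded in Tables~\ref{tab:sddiam4}--\ref{tab:sddiam7}. For each target diameter I fix the group $K$ from the table (respectively $\Z_{15}\rtimes\Z_4$, $A_5$, $\Z_2\times(\Z_{13}\rtimes\Z_3)$, $\Z_8\times(\Z_7\rtimes\Z_3)$), the corresponding set size $s$ (respectively $5,4,4,4$), and take $H=\Z_m$ for an arbitrary $m\geq 2$. I then form $G=H^k\rtimes_\varphi K$ using the homomorphism $\varphi\colon K\to S_k\leq\Aut(H^k)$ returned by the search, and define the generating set $X=\{v_i(x)\,s_i : 1\leq i\leq s,\ x\in H\}$ exactly as in the general recipe, where the $v_i$ are 0/1 vectors of length $k$ chosen so that $X$ is inverse-closed.

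The core step for each $k$ is to exhibit, for every $i\in K$, an explicit expression of each element $(\mathbf{x};i)\in G$ as a product of exactly $k$ generators from $X$ with no factor immediately followed by its inverse. Following the recipe of Section~3.3, this reduces to: (i) selecting an ordered sequence $T=(t_1,\ldots,t_k)\in S^k$ summing to $i$ in $K$ with $t_{j+1}\neq t_j^{-1}$; and (ii) checking that the resulting mapping matrix $M$ defined by $\mathbf{y}M=\mathbf{x}$ has determinant $\pm 1$ over $\Z$, so that the required coefficients $y_j\in H$ can be recovered regardless of $m$. I would present the resulting tables of products (of which the diameter~4 case is already displayed in Figure~\ref{fig:soldiam4}, and the remaining three are in the ancillary file at~\cite{ErskineTuite2016}) and verify the multiplications via the same step-by-step expansion illustrated in the diameter~6 example of Section~\ref{sec:diam6}. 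This establishes that $\Cay(G,X)$ has diameter at most $k$, order $|K|\cdot m^k$, and degree $sm$.

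Finally, I would deduce the asymptotic bounds. For degrees $d$ of the form $sm$ we obtain a Cayley graph of diameter $k$ and order $\frac{|K|}{s^k}d^k$, giving the claimed value of $L^-(k)$ along this arithmetic progression. For the residues $d\not\equiv 0\pmod s$, the same trick used at the end of Section~\ref{sec:diam6} applies: one enlarges $X$ by at most $s-1$ extra generators, taken as a single involution together with inverse-closed pairs from $G\setminus X$, leaving the order and diameter unchanged and shifting $d$ by only $O(1)$. Since $|K|/s^k$ equals $60/5^4,\ 60/4^5,\ 78/4^6,\ 168/4^7$ for $k=4,5,6,7$ respectively, the four inequalities of Theorem~\ref{thm:smalldiam} follow simultaneously.

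The main obstacle is purely combinatorial bookkeeping: checking that the tabulated product for each of the $|K|$ elements $i\in K$ both multiplies out correctly in the semidirect product and has invertible mapping matrix. Although conceptually routine, for $k=7$ and $|K|=168$ this involves more than a hundred separate sequences to verify, so the honest presentation relies on the GAP script of~\cite{GAP4} and on the ancillary tabulations in~\cite{ErskineTuite2016}; within the paper I would only display a representative worked example in full and state that the remaining cases have been verified computationally by the same procedure.
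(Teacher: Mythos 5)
Your proposal is correct and follows essentially the same route as the paper: instantiate the general semidirect product construction with the search-found parameters $(K,s,\varphi,S,V)$ from Tables~\ref{tab:sddiam4}--\ref{tab:sddiam7}, certify each element of $K$ via a product whose mapping matrix is unimodular (deferring the full tabulations to the ancillary file at~\cite{ErskineTuite2016}), and pad the generating set with an involution and inverse-closed pairs to cover degrees $d\not\equiv 0\pmod{s}$. The paper does exactly this, presenting only the $S_4$ diameter-4 solution in Figure~\ref{fig:soldiam4} as its worked example and collecting the four bounds into Theorem~\ref{thm:smalldiam}.
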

\subsection{Directed graphs}\label{sec:dir}
In the directed case, the best currently available results are by Vetr\'ik~\cite{Vetrik2012} who shows that $L^-(2)\geq 8/9$ and for $k\geq 3$, $L^-(k)\geq k/2^k$.
The search method we used for undirected Cayley graphs can be modified to search for Cayley digraphs.
The only substantial difference is that our generating set $X$ need not be inverse-closed.
This has two major consequences for the search:
\begin{itemize}
	\item The set $S$ of elements of $K$ need not be inverse-closed.
	\item The set $V$ of 0/1-vectors is not restricted by the requirement that the resulting generating set be inverse-closed.
\end{itemize}

These consequences taken together result in a substantial increase in the search space for a given set of parameters.
Due to this effect, we were only able to search a limited range of set sizes (2, 3 and 4) for diameters 3, 4 and 5. As in the undirected case, full tabulations of the optimal solutions may be found in the ancillary file at~\cite{ErskineTuite2016}.
The best results are summarised here in Table~\ref{tab:sdddiam} and in the following theorem.

\begin{theorem}\label{thm:dir}
In the class of directed Cayley graphs,
\begin{align*}
L^-(3)&\geq \frac{48}{4^3} = 0.75000 \\
L^-(4)&\geq \frac{36}{3^4} \approx 0.44444 \\
L^-(5)&\geq \frac{120}{3^5} \approx 0.49382 \\
\end{align*}
\end{theorem}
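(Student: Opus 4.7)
The plan is to adapt the semidirect product construction of Section \ref{sec:semidirect} to the directed setting and then to carry out the automated search over the enlarged parameter space. For each target diameter $k \in \{3,4,5\}$ I seek a quadruple $(K, \varphi, S, V)$ consisting of a finite group $K$ of order $n$, a fixed-point-free homomorphism $\varphi \colon K \to S_k$ into the coordinate permutations of $H^k$ (where $H = \Z_m$), an $s$-element subset $S = \{s_1, \ldots, s_s\} \subseteq K$ such that every element of $K$ can be written as a product of exactly $k$ elements of $S$ with no consecutive cancellation, and a family $V = \{v_1, \ldots, v_s\}$ of $0/1$-vectors of length $k$. The generating set for the Cayley digraph on $G = H^k \rtimes_{\varphi} K$ is then $X = \{(v_i(x); s_i) : 1 \le i \le s,\ x \in H\}$, which has size $sm$ and yields a digraph of order $nm^k$.

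Relative to the undirected case there are two key simplifications: $S$ need not be closed under inverses, and no relation is required between the $v_i$ to enforce inverse-closure of $X$. In return one must verify that \emph{every} element $(\mathbf{x}; i) \in G$, rather than only half of them, can be written as an ordered product of exactly $k$ generators. For each fixed $i \in K$ the search enumerates ordered sequences $T = (t_1, \ldots, t_k) \in S^k$ with $t_1 \cdots t_k = i$ and $t_{j+1} \neq t_j^{-1}$, reads off the corresponding vector of indices $U = (u_1, \ldots, u_k)$, and computes the $k \times k$ mapping matrix $M$ whose $j$-th row is the rotation $v_{u_j}^{\rho^{r_j}}$, where $r_j$ is the partial product of the $t$'s. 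A solution for $i$ exists precisely when some such $T$ yields an $M$ invertible over $\Z$.

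The concrete plan is therefore to run the GAP search of Section \ref{sec:semidirect} with the inverse-closure constraints relaxed, sweeping over all groups $K$ of small orders and set sizes $s \in \{2,3,4\}$ for each of $k = 3,4,5$, and to record the best admissible quadruple in each case, which is claimed to achieve $(s,n) = (4,48)$, $(3,36)$ and $(3,120)$ respectively. The main obstacle is computational rather than conceptual: dropping inverse-closure enlarges the space of candidate vector sets $V$ by a factor of roughly $2^k$ and simultaneously doubles the space of candidate subsets $S$, so effective pruning (fixing one vector up to the symmetry of $\varphi$, generating the sequences $T$ lazily, and terminating as soon as an $i$ is shown to be unreachable for a given $(K,\varphi,S,V)$) is essential to keep the search tractable. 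Verification of any candidate, by contrast, reduces to computing the determinants of at most $n$ integer matrices of size $k \times k$ and is delegated to the ancillary file \cite{ErskineTuite2016}.

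Finally, to pass from degrees of the form $d = sm$ to arbitrary sufficiently large $d$, we append up to $s-1$ further generators chosen from $G \setminus X$; in the directed case no involution is required for such padding, so this presents no obstruction, and the resulting digraph has the same order $nm^k = (n/s^k)\,d^k + o(d^k)$ and diameter at most $k$. This yields $L^-(k) \geq n/s^k$ in each of the three cases and hence the three claimed bounds.
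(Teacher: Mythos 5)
Your proposal follows essentially the same route as the paper: relax the inverse-closure conditions on $S$ and $V$ in the semidirect product construction of Section~\ref{sec:semidirect}, run the resulting (larger) computer search over set sizes up to $4$ for $k=3,4,5$, certify each candidate by checking unimodularity of the mapping matrices for every element of $K$, and defer the explicit tabulations to the ancillary file; the winning parameters $(s,n)=(4,48),(3,36),(3,120)$ you identify are exactly those of the paper (realised by $K=\Z_2\times S_4$, $\Z_3\times A_4$ and $S_5$ respectively). Your handling of intermediate degrees by padding with arbitrary extra generators, with no involution needed in the directed case, also matches the paper's argument.
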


Our new constructions are able to better the directed graph bounds of Vetr\'ik~\cite{Vetrik2012} at diameters 3, 4 and 5.

In general as one would expect, removing the restriction on generating sets results in bounds which are much better than the corresponding undirected bounds.

The current table has the curious feature that the best result we were able to find for diameter 5 is better than that for diameter 4.
This is counter-intuitive, but may simply be a consequence of the restricted space that we were able to search.

\begin{table}\centering
\begin{tabular}{|ccccc|}
\hline
Diameter $k$ & Set size $s$ & Group order $n$ & Group $K$ & $L^-(k)$ bound\\
\hline
3 & 4 & 48 & $\Z_2\times S_4$ & $48/4^3\approx 0.75000$\\
4 & 3 & 36 & $\Z_3\times A_4$ & $36/3^4\approx 0.44444$\\
5 & 3 & 120 & $S_5$ & $120/3^5\approx 0.49382$\\
\hline
\end{tabular}
\caption{Best results for directed graphs}
\label{tab:sdddiam}
\end{table}

\section{Large Cayley graphs for odd diameters \texorpdfstring{$k \geq 7$}{}}\label{sec:general}

The results of the previous section may be thought of as providing an outline method for the construction of large Cayley graphs of fixed diameter,
which we use to improve the known asymptotic bounds for certain fixed values of the diameter $k$.
The best results for a given diameter appear to arise from a somewhat unpredictable combination of the parameters in the construction.
In this section we change direction slightly and use the same outline method to develop a construction which is valid for any arbitrary odd diameter $k\geq 7$.

The best general construction for undirected graphs in the literature to date is by Macbeth, \v{S}iagiov\'a, \v{S}ir\'a\v{n} and Vetr\'ik~\cite{macbeth2010large},
which yields $L^-(k)\geq\frac{k}{3^k}$ for any diameter $k \geq 3$. 
Our construction uses the method of Section~\ref{sec:semidirect} by fixing the right hand group $K$ to be a dihedral group.
By finding suitable parameters for the generating set, we will show that this bound can be improved, for \emph{odd} diameters $k \geq 7$, 
to $L^-(k)\geq\frac{2k}{3^k}$.

The remainder of this section outlines the construction which leads to this result. 
We note that in the \emph{directed} case, for odd diameters, 
a recent result of Abas and Vetr\'\i k~\cite{Abas2017} yields $L^-(k)\geq\frac{k}{2^{k-1}}$ for odd $k\geq 3$.
In the description of our method we follow some of the notation and terminology from that paper.

Let $k=2q+1$, where $q \geq 3$. Let $H$ be the cyclic group $\Z_m$ of order $m \geq 2$ written additively.
We will label the coordinates of the $k$-fold direct product $H^k$ by the elements $0,1,\ldots,k-1$ of $\Z_k$.
Let $D_{2k} = \langle r,s\,|\,r^k=s^2=1,srs=r^{-1}\rangle$ be the dihedral group of order $2k$. 
Every element of $D_{2k}$ may be written uniquely in the form $r^is^j$ with $0\leq i\leq k-1$ and $j\in\{0,1\}$. Define automorphisms $\rho$ and $\sigma$ of $H^k$ by
\begin{align*}
(x_0,x_1,x_2,\ldots,x_{k-1})^{\rho} &= (x_{k-1},x_0,x_1,\ldots,x_{k-2})\\
(x_0,x_1,x_2,\ldots,x_{k-1})^{\sigma} &= (x_{k-1},x_{k-2},\ldots,x_1,x_0)
\end{align*}
for all $\mathbf{x} \in H^k$.  There is a homomorphism $\varphi : D_{2k} \to \Aut(H^k)$ given by $\varphi (r^is^j) = \rho ^i \sigma ^j$.  If we let $\psi _r, \psi _{r^{-1}}, \psi _s$ be the permutations of the coordinate positions of $\Z _k$ corresponding to $\varphi (r), \varphi (r^{-1})$ and $\varphi (s)$, then $\psi _r(x) = x+1, \psi _{r^{-1}}(x) = x - 1, \psi _s(x) = k-1-x$, where addition and subtraction are taken modulo $k$.  Then $\psi _r \psi _s \psi _{r^{-1}} \psi _s = \psi _r^2$.

We will proceed to construct a Cayley graph on the group $G = H^k \rtimes _{\varphi} D_{2k}$. 
We define the semidirect product as in Section~\ref{sec:semidirect} so that $G$ has elements $(\mathbf{x};y), \mathbf{x} \in H^k, y \in D_{2k}$
and the multiplication rule of $G$ is
\[(\mathbf{x}_1;y_1)(\mathbf{x}_2;y_2) = (\mathbf{x}_1^{\varphi(y_2)}+\mathbf{x}_2;y_1y_2)\]
for $\mathbf{x}_1, \mathbf{x}_2 \in H^k, y_1,y_2 \in D_{2k}$.

Let $S = \{r,r^{-1},s\}$.   We now define a corresponding family $V=\{\mathbf{v}_r,\mathbf{v}_{r^{-1}},\mathbf{v}_s\}$  of 0/1-vectors parameterised by $q$ as follows.
\begin{align*}
\mathbf{v}_r &= (\underbrace{0}_{\lceil q/2 \rceil },1,\underbrace{0}_{2q-\lceil q/2 \rceil})\\
\mathbf{v}_{r^{-1}} &= (\underbrace{0}_{\lceil q/2 \rceil -1},1,\underbrace{0}_{2q+1-\lceil q/2 \rceil})\\
\mathbf{v}_s &= (\underbrace{0}_{\lceil q/2 \rceil -1},1,1,\underbrace{0}_{2q - 1-2\lceil q/2 \rceil},1,1,\underbrace{0}_{\lceil q/2 \rceil -1}).
\end{align*}

For $x \in H$ and $y \in S$ the element of $H^k$ formed by replacing 1's in $\mathbf{v}_y$ by the element $x$ will be written as $\mathbf{v}_y(x)$.  For all $x \in H$, define elements of $G$ by
\begin{align*}
a(x) &= (\mathbf{v}_r(x);r)\\
A(x) &= (\mathbf{v}_{r^{-1}}(x);r^{-1})\\
b(x) &= (\mathbf{v}_s(x);s).  
\end{align*}

We claim that $X = \{ a(x),A(x),b(x):x \in H\}$ is a generating set for $G =  H^k \rtimes _{\varphi} D_{2k}$ such that every element of $G$ can be expressed as a product of exactly $k$ elements of $X$. Notice that $a(x)^{-1}=A(-x)$ and $b(x)^{-1}=b(-x)$, so that $X$ is identity-free and inverse-closed.  

A \emph{string} is a sequence $y_0,y_1,\ldots,y_{\ell-1}$ of length $\ell\leq k$ of elements of $S$.  The \emph{value} of the string is the element $y_0y_1 \ldots y_{l-1}$ of $D_{2k}$. With each string we may associate a general product $(\mathbf{v}_{y_0}(x_0);y_0)(\mathbf{v}_{y_1}(x_1);y_1)\cdots(\mathbf{v}_{y_{\ell-1}}(x_{\ell-1});y_{\ell-1})$, where each $x_i$ is an arbitrary element of $H$.  

Let $T = \{ t_0,t_1,\ldots,t_{\ell - 1}\} $ be a subset of $\Z_k$, representing a set of $\ell$ coordinate indices.  We will say that $T$ is \emph{free} for a string $y_0,y_1,\ldots ,y_{\ell-1}$ if for any $t$-tuple $(z_0,z_1,\ldots ,z_{t-1})$ of elements of $H$ there exist elements $x_0,x_1,\ldots ,x_{\ell - 1}$ of $H$ such that in the product $(\mathbf{v}_{y_0}(x_0);y_0)(\mathbf{v}_{y_1}(x_1);y_1)\cdots$ $(\mathbf{v}_{y_{\ell-1}}(x_{\ell-1});y_{\ell-1}) = (\mathbf{x'};y_0y_1\ldots y_{\ell - 1})$ the $t_i$-coordinate of $\mathbf{x'}$ is $z_i$ for $0 \leq i \leq \ell - 1$.  A string of length $k$ is \emph{good} if $\Z_k$ is free for the string.
We say that an element $r^is^j$ of $D_{2k}$ is \emph{covered} if there exists a good string with value $r^is^j$. 

To prove that every element of $G=H^k\rtimes_{\varphi}D_{2k}$ can be expressed as the product of exactly $k$ elements of $X$, it suffices to show that every element of $D_{2k}$ is covered.  Trivially the identity of $D_{2k}$ is covered by the string $r,r,\ldots ,r$.  Also, if a string $y_0,y_1,\ldots ,y_{k-1}$ is good, then so is the `inverse string' $y_{k-1}^{-1},\ldots ,y_1^{-1},y_0^{-1}$, so that if $r^i \in D_{2k}$ is covered, then so is $r^{-i}$.  Thus it is sufficient to show that the elements $r^i$, $1 \leq i \leq q$, and $r^is$, $0 \leq i \leq k-1$ are covered.  Since the proof requires consideration of 16 separate cases, we will work through one case in detail and display the good strings for the remaining cases in Tables \ref{tab:1mod4} and \ref{tab:3mod4}.  

Let $k\equiv 1\pmod{4}$ and put $q=2t$.  Let $i$ be an even integer such that $2 \leq i \leq q$ and consider the element $r^i$ of $D_{2k}$.  
For each such $i$ we have $r^i = s\underbrace{r}_{q+1-i}\underbrace{rsr^{-1}s}_{(i-2)/2}\underbrace{r}_{q+2-i}s$.  We claim that the string corresponding to this product is good.

Let $i \geq 4$.  The $t$-coordinate is free for $s$, so considering the first $s$ on the left, we see that the singleton
$\{t \} \underbrace{\psi _r}_{q+1-i}\underbrace{\psi _r\psi _s\psi _{r^{-1}}\psi _s}_{(i-2)/2}\underbrace{\psi _r}_{q+2-i}\psi _s 
= \{ t \} \underbrace{\psi _r}_{4t+1-i}\psi _s = \{ 4t- (5t+1-i)\} = \{ -t-1+i\}$ is free for the entire string.

The set $\{ t\}$ is free for $r$.  Therefore, considering the $j$-th $r$ in the first string $\underbrace{r}_{q+1-i}$, 
we see that $\{ t \} \underbrace{\psi _r}_{4t+1-i-j}\psi _s = \{ -t-1+i+j\}$ is free for the entire string 
for $1 \leq j \leq q+1-i$.

Expanding the product $(\mathbf{v}_r(x_1);r)(\mathbf{v}_s(x_2);s)(\mathbf{v}_{r^{-1}}(x_3);r^{-1})(\mathbf{v}_s(x_4);s)$, it is evident that the set $\{ t, t+1, 3t, 3t+1\} $ is free for the $j$-th substring $rsr^{-1}s$, $1 \leq j \leq (i-2)/2$.  It follows that the set  $\{ t, t+1, 3t, 3t+1\} \underbrace{\psi _r}_{2t-2j}\psi _s = \{ t+2j,t-1+2j,-t+2j,-t-1+2j\} $ is free for $1 \leq j \leq (i-2)/2$.

For $1 \leq j \leq q+2-i$, $\{ t \}$ is free for the $j$-th $r$ in the string $\underbrace{r}_{q+2-i}$ on the right.  
Hence for $1 \leq j \leq q+2-i$, $\{ t\} \underbrace{\psi _r}_{q+2-i-j}\psi _s = \{ t-2+i+j\}$ is free for the whole string. 

Finally, due to the presence of the final $s$, $\{ 3t+1\} $ is free for the whole string.

Let $0 \leq \ell_1 \leq \ell_2 < \ell_1' \leq \ell_2' \leq k-1$.  
If $T$ is a subset of $\Z_k$ which is free for the string $y_{\ell_1},y_{\ell_1+1},\ldots,y_{\ell_2}$ and $T'$ is a subset of $\Z_k$ which is free for the string 
$y_{\ell_1'},y_{\ell_1'+1},\ldots,y_{\ell_2'}$, where  the sets $T \psi_{\ell_2+1}\psi _{\ell_2+2}\cdots\psi _{k-1}$ 
and $T'\psi_{\ell_2'+1}\psi_{\ell_2'+2}\cdots\psi _{k-1}$ are disjoint, then their union:
\[(T \psi_{\ell_2+1}\psi_{\ell_2+2}\cdots\psi_{k-1}) \cup (T' \psi_{\ell_2'+1}\psi_{\ell_2'+2}\cdots\psi_{k-1})\]
 is free for any string of the form:
\[y_0,y_1,\ldots,y_{\ell_1},y_{\ell_1+1},\ldots,y_{\ell_2},y_{\ell_2+1},\ldots,y_{\ell_1'},y_{\ell_1'+1},\ldots,y_{\ell_2'},y_{\ell_2'+1},\ldots,y_{k-1}.\]

Collating the above free sets, it follows that the set $\{ -t+1,\ldots,3t+1\}= \Z_k$ is free for our string, so that the strings are good and the elements $r^i$ are covered for even $i$ in the range $4 \leq i \leq q$.  For $i = 2$, the string $sr^{2q-1}s$ is easily verified to be good.

In Table~\ref{tab:1mod4} we give the good strings for the remaining elements in the case $q\equiv 1\pmod{4}$.  Good strings for $k \equiv 3\pmod{4}$ and $q = 2t+1$ are displayed in Table~\ref{tab:3mod4}.

\begin{table}
\begin{tabular}{|p{0.95\textwidth}|}
\hline
{\bf Elements:} $r^i$, $i$ odd, $1 \leq i \leq q-1$    {\bf Good strings:} $\underbrace{rsr^{-1}s}_{(i-1)/2}\underbrace{r}_{q+1-i}s\underbrace{r}_{q-i}s$\\
{\bf Sketch of proof:} $\{ t,t+1,3t+1,3t+2\} $ is free for $rsr^{-1}s$, $\{ t\} $ is free for the first $s$ and $\{ 3t+1\} $ is free for the second $s$.\\
\hline
{\bf Elements:} $r^is$, $i$ even, $0 \leq i \leq q-2$    {\bf Good strings:} $\underbrace{rsr^{-1}s}_{i/2}\underbrace{r}_{q-i}s\underbrace{r}_{q-i}$\\
{\bf Sketch of proof:} $\{ t,t+1,3t+1,3t+2\} $ is free for $rsr^{-1}s$ and $\{ t\} $ is free for $s$.\\
\hline
{\bf Element:} $r^qs$      {\bf Good string:} $rr\underbrace{rsr^{-1}s}_{(q-2)/2}rsr$\\
{\bf Sketch of proof:} $\{ t,t+1,3t+1,3t+2\} $ is free for $rsr^{-1}s$ and $\{ 3t+1\} $ is free for $s$.\\
\hline
{\bf Elements:} $r^{-i}s$, $i$ odd, $1 \leq i \leq q-1$    {\bf Good strings:} $\underbrace{r}_{q+2-i}\underbrace{rsr^{-1}s}_{(i-1)/2}\underbrace{r}_{q-i}s$\\
{\bf Sketch of proof:} $\{ t,t+1,3t+1,3t+2\} $ is free for $rsr^{-1}s$ and $\{ 3t+1\} $ is free for $s$.\\
\hline
{\bf Elements:} $r^is$, $i$ odd, $1 \leq i \leq q-1$    {\bf Good strings:} $s\underbrace{r}_{q-i}\underbrace{rsr^{-1}s}_{(i-1)/2}\underbrace{r}_{q+2-i}$\\
{\bf Sketch of proof:} $\{ t,t+1,3t,3t+1\} $ is free for $rsr^{-1}s$ and $\{ t\} $ is free for $s$.\\
\hline
{\bf Element:} $r^{-q}s$      {\bf Good string:} $rsrr\underbrace{rsr^{-1}s}_{(q-4)/2}rsr^{-1}r^{-1}s$\\
{\bf Sketch of proof:} $\{ t,t+1,3t+1,3t+2\} $ is free for $rsr^{-1}s$, $\{ 3t+1\} $ is free for the first $s$ 
and $\{ t-1 \} $ is free for the second and third $s$.\\
\hline
{\bf Elements:} $r^{-i}s$, $i$ even, $2 \leq i \leq q-2$    {\bf Good strings:} $\underbrace{r}_{q-i}s\underbrace{r}_{q-1-i}\underbrace{rsr^{-1}s}_{i/2}r$\\
{\bf Sketch of proof:} $\{ t,t+1,3t,3t+1\} $ is free for $rsr^{-1}s$ and $\{ t\} $ is free for $s$.\\
\hline
\end{tabular}
\caption{Good strings for remaining elements of $D_{2k}$ in the case $k\equiv 1\pmod{4}$}
\label{tab:1mod4}
\end{table}

\begin{table}
\begin{tabular}{|p{0.95\textwidth}|}
\hline
{\bf Elements:} $r^i$, $i$ even, $2 \leq i \leq q-1$   {\bf Good strings:} $ s\underbrace{r}_{q+1-i}\underbrace{rsr^{-1}s}_{(i-2)/2}\underbrace{r}_{q+2-i}s$\\
{\bf Sketch of proof:} $\{ t+1,t+2,3t+2,3t+3\} $ is free for $rsr^{-1}s$, $\{ t+1\} $ is free for the first $s$ and $\{ 3t+2\} $ is free for the second $s$.\\
\hline
{\bf Elements:} $r^i$, $i$ odd, $1 \leq i \leq q-2$   {\bf Good strings:} $\underbrace{rsr^{-1}s}_{(i-1)/2}\underbrace{r}_{q+1-i}s\underbrace{r}_{q-i}s$\\
{\bf Sketch of proof:} $\{ t+1,t+2,3t+2,3t+3\} $ is free for $rsr^{-1}s$, $\{ 3t+2\} $ is free for $s$.\\
\hline
{\bf Element:} $r^q$       {\bf Good string:} $srsr\underbrace{rsr^{-1}s}_{(q-3)/2}rrr$\\
{\bf Sketch of proof:} $\{ t+1,t+2,3t+1,3t+2\} $ is free for $rsr^{-1}s$, $\{ t+1\} $ is free for $s$.\\
\hline
{\bf Elements:} $r^is$, $i$ even, $0 \leq i \leq q-1$  {\bf Good strings:} $\underbrace{rsr^{-1}s}_{i/2}\underbrace{r}_{q-i}s\underbrace{r}_{q-i}$\\
{\bf Sketch of proof:} $\{ t+1,t+2,3t+2,3t+3\} $ is free for $rsr^{-1}s$ and $\{ 3t+2\} $ is free for $s$.\\
\hline
{\bf Elements:} $r^{-i}s$, $i$ odd, $1 \leq i \leq q$  {\bf Good strings:} $\underbrace{r}_{q+1-i}\underbrace{rsr^{-1}s}_{(i-1)/2}\underbrace{r}_{q+1-i}s$\\
{\bf Sketch of proof:} $\{ t+1,t+2,3t+2,3t+3\} $ is free for $rsr^{-1}s$ and $\{ 3t+2\} $ is free for $s$.\\
\hline
{\bf Elements:} $r^is$, $i$ odd, $1 \leq i \leq q-2$  {\bf Good strings:} $s\underbrace{r}_{q-i}\underbrace{rsr^{-1}s}_{(i-1)/2}\underbrace{r}_{q+2-i}$\\
{\bf Sketch of proof:} $\{ t+1,t+2,3t+2,3t+3\} $ is free for $rsr^{-1}s$ and $\{ t+1\} $ is free for $s$.\\
\hline
{\bf Element:} $r^qs$       {\bf Good string:} $ sr\underbrace{rsr^{-1}s}_{(q-1)/2}r$\\
{\bf Sketch of proof:} $\{ t+1,t+2,3t+2,3t+3\} $ is free for $rsr^{-1}s$, $\{ 3t+1\} $ is free for $s$.\\
\hline
{\bf Elements:} $r^{-i}s$, $i$ even, $2 \leq i \leq q-1$ {\bf Good strings:} $\underbrace{r}_{q+1-i}s\underbrace{r}_{q+1-i}\underbrace{rsr^{-1}s}_{(i-2)/2}rr$\\
{\bf Sketch of proof:} $\{ t+1,t+2,3t+2,3t+3\} $ is free for $rsr^{-1}s$ and $\{ t+1\} $ is free for $s$.\\
\hline
\end{tabular}
\caption{Good strings for $k\equiv 3\pmod{4}$ and $q = 2t+1$}
\label{tab:3mod4}
\end{table}

Since all elements of $D_{2k}$ are covered,  for $m \geq 2$ we have Cayley graphs with odd diameters $k \geq 7$, 
degree $d = 3m$ and order  $n = 2km^k = (2k/3^k)d^k$.  
To extend to intermediate degrees, we may simply add one or two additional generators to our set $X$. 
Let $u = (0,0,0,0,\ldots,0;rs)$ and $v = (0,0,\ldots,0;r^2s)$ and $X_1 = X \cup \{ u\} $ and $X_2 = X \cup \{ u,v\}$. 
Then for $i = 1,2$, $\Cay(G,X_i)$ has diameter $k$, degree $d = 3m+i$ and order $n = (2k/3^k)(d-i)^k$.  
It follows that for all odd $k \geq 7$ and all degrees $d \geq 6$, there exists a Cayley graph of diameter $k$,
degree $d$ and order $(2k/3^k)d^k$.  

This completes the proof of the main result of this section, which we state here as a theorem.

\begin{theorem}\label{thm:general}
For odd $k \geq 7$, 
\[L^-(k)\geq\frac{2k}{3^k}.\]
\end{theorem}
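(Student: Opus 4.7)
The plan is to apply the general semidirect product machinery of Section~\ref{sec:semidirect} with the acting group $K$ specialised to the dihedral group $D_{2k}$ of order $2k$, where $k=2q+1$ is odd with $q\geq 3$. Concretely, I would take $H=\Z_m$, form $G=H^k\rtimes_\varphi D_{2k}$ via the homomorphism sending $r$ to a cyclic rotation $\rho$ and $s$ to the reflection $\sigma$ of the $k$ coordinates, and choose the target subset $S=\{r,r^{-1},s\}\subseteq D_{2k}$ so that the resulting degree is $|S|\cdot m=3m$. Given this choice, the bound $L^-(k)\geq 2k/3^k$ follows immediately from the general principle $L^-(k)\geq n/s^k$ with $n=|D_{2k}|=2k$ and $s=3$, once we exhibit a generating set $X\subseteq G$ of size $3m$ such that every element of $G$ is a product of exactly $k$ elements of $X$.

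The construction of $X$ is forced by two requirements: inverse-closedness and diameter~$k$. For the former I would define three 0/1 vectors $\mathbf{v}_r,\mathbf{v}_{r^{-1}},\mathbf{v}_s$ and set $a(x)=(\mathbf{v}_r(x);r)$, $A(x)=(\mathbf{v}_{r^{-1}}(x);r^{-1})$, $b(x)=(\mathbf{v}_s(x);s)$; for $a(x)^{-1}=A(-x)$ and $b(x)^{-1}=b(-x)$ I need $\mathbf{v}_{r^{-1}}=\mathbf{v}_r^{\rho^{-1}}$ and $\mathbf{v}_s$ to be palindromic. The concrete choice is to place a single $1$ in $\mathbf{v}_r$ at position $\lceil q/2\rceil$ (so $\mathbf{v}_{r^{-1}}$ has its $1$ at position $\lceil q/2\rceil-1$), and to put four $1$'s symmetrically placed in $\mathbf{v}_s$ at positions $\lceil q/2\rceil-1,\lceil q/2\rceil,k-\lceil q/2\rceil,k-\lceil q/2\rceil+1$, so that the palindrome condition holds and the four $1$'s will interact usefully with rotations.

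The core combinatorial task, and the main obstacle, is to prove that every element of $D_{2k}$ can be written as a length-$k$ word $y_0y_1\cdots y_{k-1}$ over $\{r,r^{-1},s\}$ for which the associated product $\prod_i(\mathbf{v}_{y_i}(x_i);y_i)$ has its $H^k$-component ranging over all of $H^k$ as the $x_i$ vary. I would formalise this via the notion of a subset $T\subseteq\Z_k$ being \emph{free} for a substring and of a string being \emph{good}, then establish a modular lemma: if subsets $T,T'$ are free for two disjoint substrings and their pushforwards $T\psi_{\ell_2+1}\cdots\psi_{k-1}$ and $T'\psi_{\ell_2'+1}\cdots\psi_{k-1}$ are disjoint in $\Z_k$, then their union is free for the concatenation. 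The main calculation, using $\psi_r\psi_s\psi_{r^{-1}}\psi_s=\psi_r^2$, is that a single block $rsr^{-1}s$ frees a four-element set $\{t,t+1,3t,3t+1\}$ (or a shift thereof), so concatenating $(i-2)/2$ such blocks with flanking $r$'s and $s$'s progressively covers all of $\Z_k$.

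With this infrastructure the verification is reduced to a finite case analysis over $D_{2k}$: by the inverse-string symmetry and the trivial case of the identity, it suffices to cover $r^i$ for $1\leq i\leq q$ and $r^{\pm i}s$ for $0\leq i\leq q$, split further by the parity of $i$ and by $k\bmod 4$. I would work through one representative case, such as even $i$ with $k\equiv 1\pmod 4$ using the string $s\,r^{q+1-i}(rsr^{-1}s)^{(i-2)/2}r^{q+2-i}s$, tracking the free sets contributed by each $s$, each isolated $r$, and each $rsr^{-1}s$ block, verifying that their pushforwards tile $\Z_k$ exactly. The remaining fifteen cases would be handled uniformly with tables giving the good string and the seed free sets. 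Finally, to upgrade the construction from degrees $d\equiv 0\pmod 3$ to all $d\geq 6$, I would adjoin one or two extra generators of the form $(\mathbf{0};rs)$ and $(\mathbf{0};r^2s)$, which preserve the diameter bound, so that for every sufficiently large $d$ we obtain a Cayley graph of diameter $k$ and order $(2k/3^k)d^k+o(d^k)$, giving $L^-(k)\geq 2k/3^k$ as claimed.
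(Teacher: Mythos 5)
Your proposal follows the paper's own proof essentially verbatim: the same group $H^k\rtimes_{\varphi}D_{2k}$, the same set $S=\{r,r^{-1},s\}$ and vectors $V$, the same free/good-string formalism with the $rsr^{-1}s$ block freeing a four-element set via $\psi_r\psi_s\psi_{r^{-1}}\psi_s=\psi_r^2$, the same sixteen-case analysis split by the parity of $i$ and by $k\bmod 4$, and the same degree-filling step using $(\mathbf{0};rs)$ and $(\mathbf{0};r^2s)$. (The only quibble is an off-by-one in your description of $\mathbf{v}_s$ --- the last two $1$'s sit at positions $k-1-\lceil q/2\rceil$ and $k-\lceil q/2\rceil$, not $k-\lceil q/2\rceil$ and $k-\lceil q/2\rceil+1$ --- but your stated palindrome requirement determines the intended vector unambiguously.)
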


\section{Concluding remarks}
We conclude with some brief remarks on our results and possible areas for future research. 
The diameter 3 results of Section~\ref{sec:matgroups} provide a useful improvement over the previous bound.
Matrix groups may be an interesting area to explore for other diameters, although there is no very obvious way to generalise our particular construction.

The techniques of Section~\ref{sec:semidirect} provide a framework which has the possibility for generating interesting constructions at a wider range of diameters.
However, as it stands the best results found could be considered ``sporadic'' in the sense that the particular groups and generating sets used for our
best constructions are the result of computer search. 
We note particularly that the puzzling non-monotonicity in the results of Section~\ref{sec:dir} seems likely to be caused by restrictions in the space
of possible construction parameters we were able to search.
Further progress might be made by considering a wider range of automorphisms of the direct product $H^k$ in our construction, 
although this would considerably increase the complexity of the search. Initial brief investigations in this direction have not as yet yielded any improved results.

A more interesting line of enquiry might be to extend the techniques of the general construction in Section~\ref{sec:general}, 
firstly to the case of even diameters, but also to try to reduce the denominator of $3^k$ in the asymptotic formula.
\bibliographystyle{plain}

\end{document}